\documentclass[11pt,leqno]{amsart}

\usepackage[T1]{fontenc}
\usepackage[utf8]{inputenc}
\usepackage{lmodern}
\usepackage{microtype}

\usepackage{amsmath,amssymb,mathtools}

\usepackage[margin=1.05in]{geometry}
\usepackage{enumitem}
\usepackage[colorlinks=true,linkcolor=blue,citecolor=blue,urlcolor=blue]{hyperref}

\usepackage{aliascnt}
\usepackage[capitalize,noabbrev]{cleveref}

\expandafter\let\csname le\endcsname\leqslant
\expandafter\let\csname ge\endcsname\geqslant
\newcommand{\abs}[1]{\left\lvert #1 \right\rvert}

\newcommand{\1}{\mathbf{1}}
\newcommand{\PP}{\mathbb{P}}
\newcommand{\N}{\mathbb{N}}
\newcommand{\Z}{\mathbb{Z}}

\newcommand{\cA}{\mathcal{A}}
\newcommand{\cS}{\mathcal{S}}

\newtheorem{theorem}{Theorem}[section]
\newaliascnt{lemma}{theorem}
\newtheorem{lemma}[lemma]{Lemma}
\aliascntresetthe{lemma}
\newaliascnt{proposition}{theorem}
\newtheorem{proposition}[proposition]{Proposition}
\aliascntresetthe{proposition}
\newaliascnt{corollary}{theorem}

\aliascntresetthe{corollary}
\theoremstyle{definition}
\newaliascnt{definition}{theorem}
\newtheorem{definition}[definition]{Definition}
\aliascntresetthe{definition}
\theoremstyle{remark}
\newaliascnt{remark}{theorem}

\aliascntresetthe{remark}

\crefname{theorem}{Theorem}{Theorems}
\crefname{lemma}{Lemma}{Lemmas}
\crefname{proposition}{Proposition}{Propositions}
\crefname{corollary}{Corollary}{Corollaries}
\crefname{definition}{Definition}{Definitions}
\crefname{remark}{Remark}{Remarks}

\title[Short intervals for the Romanoff-type sumset]{Short intervals for the Romanoff-type sumset}

\author[Y.~Ding]{Yuchen Ding}
\address{School of Mathematics, Yangzhou University, Yangzhou 225002, People's Republic of China; HUN-REN Alfr\'ed R\'enyi Institute of Mathematics, Budapest, Pf.\ 127, H-1364, Hungary}
\email{ycding@yzu.edu.cn}

\author[J.~Verwee]{Johann Verwee}
\address{Independent Researcher, France}
\email{mverwee@gmail.com}

\date{}
\subjclass[2020]{11N36, 11P32, 11N05}
\keywords{Romanoff-type sumset, short intervals, lacunary sequences, dispersion method, sieve methods}

\begin{document}

\begin{abstract}
Let $X$ be large and let $\PP$ denote the set of primes. Fix positive real parameters $r_1,\dots,r_s$ and a parameter $\lambda\geqslant 1$ determined by a balancing relation, and let $\cA_\lambda(X)\subset[1,2X]$ be the associated lacunary set generated by sums of powers of $2$ with polynomially growing exponents. Set $\cS_\lambda:=\PP+\cA_\lambda(X)$.
Fix $\varepsilon>0$, choose $\theta$ with $2/15+\varepsilon<\theta<0.99$, and set $h=X^{\theta}$.
We prove that for all but $O_{\varepsilon}\left(X\exp\left(-c_{\varepsilon}(\log X)^{1/4}\right)\right)$ values of $x\in[X,2X]$, the short interval $(x,x+h]$ contains $\asymp_{\varepsilon} h$ integers of the form $p+a$ with $p\in\PP$ and $a\in\cA_\lambda(X)$.
\end{abstract}

\maketitle

\begin{center}
\small Published in \emph{Journal of Number Theory} \textbf{289} (2026), 114--131.\\
\small DOI: \href{https://doi.org/10.1016/j.jnt.2026.04.002}{10.1016/j.jnt.2026.04.002}.
\end{center}

\section{Introduction}
Let $\PP$ denote the set of prime numbers.
Romanoff proved that the sumset $\PP+\{2^k:k\geqslant 0\}$ has positive lower density~\cite{Romanoff1934}, following earlier questions of de~Polignac~\cite{dePolignac1849a,dePolignac1849b}.
Erd\H{o}s initiated a systematic study of multiplicities and related variants~\cite{Erdos1950} (see also van~der~Corput~\cite{vanDerCorput1950}), and quantitative bounds for Romanoff-type representation functions have been developed more recently, for instance by Chen and Ding~\cite{ChenDing2023}.
See also Elsholtz and Schlage-Puchta~\cite{ElsholtzSchlagePuchta2018} for explicit numerical lower bounds on Romanov's constant.

In this paper we study a short-interval refinement for the structured additive set $\cA_{\lambda}$ introduced by Chen and Xu~\cite{ChenXu2024} (see also Ding and Zhai~\cite{DingZhai}).
For $h=X^{\theta}$ in the range of \Cref{thm:main-elements,thm:main-rep}, we show that for all but a stretched-exponentially small exceptional set of $x\in[X,2X]$ the interval $(x,x+h]$ contains $\asymp h$ elements of the sumset $\PP+\cA_{\lambda}(X)$, and the number of representations in such windows is also $\asymp h$.
In the model case of the Romanoff representation function 
$$ f_{\mathrm{Rom}}(n):=\#\left\{(p,k)\in\PP\times\mathbb N_0:\ p+2^k=n\right\}, $$
we additionally record a short-interval Erd\H{o}s-type statement for a positive proportion of windows, of an integer with unusually large pointwise multiplicity. We now set up the notation and state the main results.

Let $X\geqslant 3$ and set $h=X^{\theta}$ with $\theta\in(0,1)$.
Fix an integer $s\geqslant 2$ and positive real numbers $r_1,\dots,r_s>1$ such that
\begin{equation}\label{eq:reciprocal-sum}
\frac{1}{r_1}+\cdots+\frac{1}{r_{s-1}}<1\leqslant \frac{1}{r_1}+\cdots+\frac{1}{r_s}.
\end{equation}
Let $\lambda\geqslant 1$ be defined by
\begin{equation}\label{eq:lambda-balance}
\frac{1}{r_1}+\cdots+\frac{1}{r_{s-1}}+\frac{1}{\lambda r_s}\ =\ 1,
\end{equation}
Define the additive set $\cA_{\lambda}$ (introduced by Chen--Xu~\cite{ChenXu2024}; see also Ding--Zhai~\cite{DingZhai}) by
\begin{equation}\label{eq:def-A}
\cA_{\lambda}\ :=\ \left\{2^{\lfloor k_1^{r_1}\rfloor}+\cdots+2^{\lfloor k_{s-1}^{r_{s-1}}\rfloor}+2^{\lfloor \lfloor k_s^{\lambda}\rfloor^{r_s}\rfloor}\ :\ k_1,\dots,k_s\in\N\right\}\subset\N.
\end{equation}
For $X\geqslant 3$ we truncate
\[
\cA_{\lambda}(X)\ :=\ \cA_{\lambda}\cap\left[1,2X\right],
\qquad
\cS\ :=\ \PP+\cA_{\lambda}(X).
\]
For real $x\geqslant 1$ we denote by
\[
A_{\lambda}(x)\ :=\ \#\left(\cA_{\lambda}\cap\left[1,x\right]\right)
\]
the associated counting function. The bound $A_{\lambda}(x)\asymp\log x$ holds for all $x\geqslant 3$ (see Chen--Xu~\cite{ChenXu2024}; see also Ding--Zhai~\cite{DingZhai}).
In particular,
\begin{equation}\label{eq:sizeA}
\#\cA_{\lambda}(X)\ \asymp\ \log X.
\end{equation}

In what follows, the parameters $s,r_1,\dots,r_s$ (and hence $\lambda$ and the set $\cA_{\lambda}$) are fixed, and all implied constants are allowed to depend on this data.

Define the representation function
\begin{equation}\label{eq:def-f}
f(n)\ :=\ \sum_{a\in\cA_{\lambda}(X)} \1_{\PP}(n-a),
\end{equation}
so that $f(n)$ counts representations $n=p+a$ with $p\in\PP$ and $a\in\cA_{\lambda}(X)$.
For real $x$ define the window sums
\begin{equation}\label{eq:def-R}
R(x)\ :=\ \sum_{x<n\leqslant x+h} f(n),
\qquad
Q(x)\ :=\ \sum_{x<n\leqslant x+h} f(n)^2,
\end{equation}
and the corresponding count of \emph{elements} of the sumset $\cS$,
\begin{equation}\label{eq:def-Sx}
S(x)\ :=\ \#\left(\cS\cap(x,x+h]\right)\ =\ \sum_{x<n\leqslant x+h}\1_{f(n)\geqslant 1}.
\end{equation}

\medskip

\begin{theorem}[Local density of $\cS$ in short intervals]\label{thm:main-elements}
Fix $\varepsilon>0$. Let $\theta$ satisfy $2/15+\varepsilon<\theta<0.99$, and set $h=X^{\theta}$.
Then for all but $O_{\varepsilon}\left(X\exp\left(-c_{\varepsilon}(\log X)^{1/4}\right)\right)$ integers $x\in[X,2X]$, one has
\[
\#\left(\cS\cap(x,x+h]\right)\ \asymp\ h.
\]
The implied constants depend at most on $\lambda$, $\varepsilon$ and $\theta$.
\end{theorem}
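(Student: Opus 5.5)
The plan is the usual second-moment (Cauchy--Schwarz) argument. The upper bound $S(x)\le h$ is trivial, so only the lower bound needs work. Since $S(x)=\sum_{x<n\le x+h}\1_{f(n)\ge1}$, Cauchy--Schwarz gives
\[
R(x)^2\le S(x)\,Q(x),\qquad\text{so}\qquad S(x)\ge \frac{R(x)^2}{Q(x)}.
\]
It therefore suffices to show that, outside the stated exceptional set of $x$, we have $R(x)\gg h$ and $Q(x)\ll h$.

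\emph{First moment.} Write
\[
R(x)=\sum_{a\in\cA_\lambda(X)}\bigl(\pi(x+h-a)-\pi(x-a)\bigr).
\]
The only $a$ that contribute are those with $a<x+h$; they all lie in $[1,2X]$. I would restrict to $a\le X/2$, say. By \eqref{eq:sizeA} and the lacunary structure, there are still $\gg\log X$ such $a$, and for each of them $x-a\asymp X$.

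For each fixed $a$ I would use the almost-all short-interval prime number theorem: for $h=X^\theta$ with $\theta>1/6+\varepsilon$ (Huxley-type zero-density estimates, with an $\exp(-c(\log X)^{1/3})$-type saving), one has $\pi(y+h)-\pi(y)\sim h/\log y$ for all but a small exceptional set of $y\in[X/2,2X]$.

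Taking a union over the $O(\log X)$ shifts $y=x-a$ enlarges the exceptional set only by a factor $\log X$. Thus for non-exceptional $x$,
\[
R(x)\gg \log X\cdot \frac{h}{\log X}=h.
\]
The range $\theta>2/15$ is below the Huxley exponent $1/6$, so here I would need a lower bound rather than an asymptotic. For this I would invoke a Harman-type sieve lower bound $\pi(y+h)-\pi(y)\gg h/\log y$ for almost all $y$, valid down to exponents like $1/10$ or $1/15$ (Harman's book / Jia). Its exceptional set, obtained from mean-value estimates for Dirichlet polynomials, has measure $X\exp(-c(\log X)^{1/4})$, which matches the stated exceptional set.

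\emph{Second moment.} Expanding,
\[
Q(x)=\sum_{a,a'}\#\{p,p' : p+a=p'+a'\in(x,x+h]\}.
\]
The diagonal terms $a=a'$ contribute about $R(x)\ll h$, using the trivial upper bound $\pi(y+h)-\pi(y)\ll h/\log X$ from Brun--Titchmarsh. For the off-diagonal terms, with $d=a-a'\neq0$, a Selberg upper-bound sieve in a short interval gives
\[
\#\{p\in(y,y+h]: p+d\text{ prime}\}\ll \frac{h}{\log^2 X}\prod_{q\mid d}\Bigl(1+\frac1q\Bigr),
\]
uniformly for $h\ge X^{\varepsilon}$. This bound holds for every $x$, with no exceptional set. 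Summing over pairs then gives
\[
Q(x)\ll h+\frac{h}{\log^2X}\sum_{a\neq a'\in\cA_\lambda(X)}\frac{|a-a'|}{\phi(|a-a'|)}.
\]
So it remains to show that
\[
\sum_{a\ne a'}\frac{|a-a'|}{\phi(|a-a'|)}\ll(\log X)^2.
\]

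\emph{Main obstacle.} The hardest step is this Romanoff-type average of $d/\phi(d)$ over differences of elements of $\cA_\lambda$. I would write $d/\phi(d)\ll\sum_{m\mid d}\mu^2(m)/m$ and swap the order of summation, which reduces the problem to counting pairs $a\equiv a'\pmod m$, uniformly in odd squarefree $m$. The key quantity is the multiplicative order $\mathrm{ord}_m(2)$: one needs that, for each $m$, the exponents $\lfloor k^{r_i}\rfloor$ are sufficiently equidistributed modulo $\mathrm{ord}_m(2)$. This is essentially the input of Chen--Xu~\cite{ChenXu2024} and Ding--Zhai~\cite{DingZhai}, who prove positive lower density of $\PP+\cA_\lambda$ via precisely this second-moment bound. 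I would import their estimate directly, checking only that it applies to the truncation $\cA_\lambda(X)$. Combining $R(x)\gg h$ and $Q(x)\ll h$ then yields $S(x)\gg h$ for all non-exceptional $x$.
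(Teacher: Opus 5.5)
Your overall architecture matches the paper's: Cauchy--Schwarz $S(x)\geqslant R(x)^2/Q(x)$; a first moment from an almost-all short-interval prime count applied to $O(\log X)$ shifts $x-a$, with a union bound; and a uniform bound $Q(x)\ll h$ from Selberg's sieve for prime pairs together with the Chen--Xu/Ding--Zhai average of the singular series over differences in $\cA_\lambda(X)$.

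The gap is in the first moment for $2/15+\varepsilon<\theta\leqslant 1/6$. There you invoke a Harman/Jia-type sieve lower bound and assert that its exceptional set has measure $X\exp(-c(\log X)^{1/4})$. As stated in the literature, those results only give an exceptional set of measure $o(X)$, typically $O(X(\log X)^{-A})$. Upgrading all the Buchstab and Type~II pieces of such a sieve to a stretched-exponential saving is not available off the shelf, and your proposal does not justify it. The $1/4$ in the theorem cannot simply be matched to these results. As written, your argument therefore proves the statement only in one of two weakened forms:
\begin{itemize}
\item for $\theta>1/6$ with a strong exceptional set (Huxley plus the Vinogradov--Korobov region), or
\item for $\theta>2/15$ with a much weaker exceptional set.
\end{itemize}
Neither is the theorem as stated.

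The missing ingredient is the Guth--Maynard zero-density estimate $N(\sigma,T)\ll T^{30(1-\sigma)/13+o(1)}$. The threshold in the statement is exactly $2/15=1-2\cdot\tfrac{13}{30}$, its almost-all exponent. So no sieve lower bound is needed in that range, because one has the \emph{asymptotic} there. Concretely, \cite[Cor.~1.4]{GuthMaynard2024} gives
\[
\pi(t+y)-\pi(t)=\frac{y}{\log t}+O_\varepsilon\bigl(y\exp(-(\log t)^{1/4})\bigr)
\]
for all but $O(X\exp(-(\log X)^{1/4}))$ integers $t\in[X,2X]$, uniformly for $X^{2/15+\varepsilon}\leqslant y\leqslant X^{0.99}$. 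The paper applies this on the two dyadic blocks containing $x-a$ and takes the union over the $O(\log X)$ shifts, which gives exactly the stated exceptional set. With this substitution the rest of your plan goes through.

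One minor point in your second-moment step: the imported Chen--Xu/Ding--Zhai estimate concerns divisors $d$ with $P^{+}(d)<\log x$. Before applying it you should discard the primes $p\geqslant\log X$ dividing $\Delta=a-a'$. Since $\abs{\Delta}\leqslant 2X$ has at most $\log(2X)/\log\log X$ such prime factors, their contribution to $\Delta/\varphi(\Delta)$ is bounded.
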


Theorem~\ref{thm:main-elements} is the conclusion we ultimately care about: it asserts that the
sumset $\cS$ meets almost all intervals of length $h$ with the expected order of magnitude.
To access it through Cauchy--Schwarz, it is convenient to also record the corresponding
first-moment statement for the representation function $f$.

\begin{theorem}[Typical number of representations]\label{thm:main-rep}
Fix $\varepsilon>0$. Let $\theta$ satisfy $2/15+\varepsilon<\theta<0.99$, and set $h=X^{\theta}$.
Then for all but $O_{\varepsilon}\left(X\exp\left(-c_{\varepsilon}(\log X)^{1/4}\right)\right)$ integers $x\in[X,2X]$, one has
\[
R(x)\ \asymp\ h.
\]
The implied constants depend at most on $\lambda$, $\varepsilon$ and $\theta$.
\end{theorem}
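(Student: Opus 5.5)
We prove \Cref{thm:main-rep} by writing $R(x)$ as a sum over the $\asymp\log X$ shifts. Interchanging summations in \eqref{eq:def-R} and \eqref{eq:def-f} gives
\[
R(x)=\sum_{a\in\cA_\lambda(X)}\Bigl(\pi(x-a+h)-\pi(x-a)\Bigr).
\]
So $R(x)$ is a sum of counts of primes in the short intervals $(x-a,x-a+h]$. Only shifts with $a\le x$ contribute, and since $x\ge X$ and $a\le 2X$, a positive proportion of $\cA_\lambda(X)$ qualifies. This needs a lower bound $\#(\cA_\lambda\cap[1,X/2])\gg\log X$, which follows from $A_\lambda(x)\asymp\log x$ applied at $x=X/2$.

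For an interval $(y,y+h]$ with $y\asymp X$, the expected count is $h/\log y\asymp h/\log X$. If every relevant interval obeyed this, summing over $\asymp\log X$ values of $a$ would give $R(x)\asymp h$. The problem is therefore to show that, for almost all $x$, \emph{every} shifted window $(x-a,x-a+h]$ with $a\le X/2$ (or at least most of them, weighted appropriately) contains $\asymp h/\log X$ primes.

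The input is a prime number theorem in almost all short intervals with a stretched-exponential exceptional set. For $h=X^\theta$ with $\theta>1/6+\varepsilon$, Huxley's zero-density estimate gives $\pi(y+h)-\pi(y)\sim h/\log y$ for all $y\le 2X$ with no exceptions. In the remaining range $2/15+\varepsilon<\theta\le 1/6$, I would use a mean-square (Selberg/Saffari--Vaughan type) bound of the form
\[
\int_X^{2X}\Bigl|\psi(y+h)-\psi(y)-h\Bigr|^2dy\ll h^2X\exp\bigl(-c(\log X)^{1/3}\bigr).
\]
This comes from zero-density estimates of Huxley type, valid once $h\ge X^{1/6+\varepsilon}$, combined with the Vinogradov--Korobov zero-free region. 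For smaller $\theta$ I would get only a \emph{lower} bound of the right order, via a Harman-type sieve minorant.

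This last step is the main obstacle, and I expect it is where the exponent $2/15$ comes from. A Harman sieve lower bound $\pi(y+h)-\pi(y)\ge c\,h/\log y$ holds for almost all $y$ when $h=y^{1/15+\varepsilon}$-ish, or rather for $\theta>2/15$ in a setting with good exceptional-set control. The plan is to:
\begin{itemize}
\item use a Harman-sieve lower bound for the lower estimate, since a positive proportion suffices;
\item use the trivial Brun--Titchmarsh bound $\pi(y+h)-\pi(y)\le 2h/\log h\ll h/\log X$ for the upper estimate, which holds for every $y$, so $R(x)\ll h$ always.
\end{itemize}
The sieve lower bound must come with an exceptional set of measure $\ll X\exp(-c(\log X)^{1/4})$. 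I would obtain this by bounding the mean square of the Type I/II error terms using Dirichlet polynomial mean values and large values estimates, with a Vinogradov--Korobov-saving Chebyshev step; the exponent $1/4$ would absorb the losses.

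Given an exceptional set $E\subset[X/2,2X]$ of $y$ where the lower bound fails, with $|E|\ll X\exp(-c(\log X)^{1/4})$, the remaining step is a union bound. Declare $x$ exceptional if $x-a\in E$ for some $a\in\cA_\lambda(X)$ with $a\le X/2$. The number of such $x$ is at most $\#\cA_\lambda(X)\cdot|E|\ll X\log X\exp(-c(\log X)^{1/4})$. The factor $\log X$ is absorbed by shrinking $c$ to $c_\varepsilon$.

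For every non-exceptional $x$ we then get
\[
R(x)\ \ge\ \sum_{a\le X/2}c\,\frac{h}{\log X}\ \gg\ h,
\]
and combined with the upper bound $R(x)\ll h$ this gives $R(x)\asymp h$. The conversion from integer $x$ to measure is handled by the fact that a lower bound holding on $(y,y+h/2]$-type subintervals is stable under unit shifts; alternatively one can apply the sieve to intervals of length $h/2$ and use monotonicity.
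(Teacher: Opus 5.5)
Your reduction matches the paper: expanding $R(x)$ as a sum of prime counts over the $\asymp\log X$ shifts, the pointwise Brun--Titchmarsh upper bound, and the union bound over $a$. The gap is the analytic input for the lower bound, which you do not actually supply.

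\textbf{Huxley's range is misstated.} The claim that Huxley's zero-density estimate gives $\pi(y+h)-\pi(y)\sim h/\log y$ for \emph{all} $y$ once $\theta>1/6$ is false. A density bound $N(\sigma,T)\ll T^{A(1-\sigma)+o(1)}$ yields the asymptotic in all intervals only for $\theta>1-1/A$, which is $7/12$ for Huxley's $A=12/5$. It yields almost all intervals for $\theta>1-2/A$, which is $1/6$. Huxley plus Vinogradov--Korobov, via the mean-square bound you write down, therefore covers only $\theta>1/6$, with a stretched-exponential exceptional set. Your text then proposes to use that same mean-square bound in the range $\theta\le 1/6$, where by your own description it is not valid.

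\textbf{The range $2/15+\varepsilon<\theta\le 1/6$ is only a program.} There you offer a Harman-sieve minorant whose Type I/II errors would be controlled in mean square, with an exceptional set of size $X\exp(-c(\log X)^{1/4})$. This is not a result you can quote. Your sketch gives no indication of how such a strong exceptional-set saving would survive the sieve decomposition, and that saving is exactly what the theorem requires. Your guess about where $2/15$ comes from is also off. It equals $1-2\cdot\frac{13}{30}$, i.e.\ the almost-all threshold for the Guth--Maynard density estimate $N(\sigma,T)\ll T^{30(1-\sigma)/13+o(1)}$, which comes from their new large-value estimate for Dirichlet polynomials. It does not come from a sieve.

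\textbf{The missing step.} What you need is precisely \cref{lem:GM}, i.e.\ \cite[Cor.~1.4]{GuthMaynard2024}. It gives $\pi(t+y)-\pi(t)=y/\log t+O(y\exp(-(\log t)^{1/4}))$ for all but $O(X\exp(-(\log X)^{1/4}))$ integers $t\in[X,2X]$, uniformly for $X^{2/15+\varepsilon}\le y\le X^{0.99}$. Apply it on the two dyadic blocks covering $t=x-a\in[X/2,2X]$, then run your union bound over the $\ll\log X$ shifts. This proves the lower bound $R(x)\gg h$ for the entire range of $\theta$ at once, with no case split at $1/6$ and no sieve minorant.
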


\medskip

\noindent\emph{Strategy.} The pointwise upper bound $R(x)\ll h$ follows from Brun--Titchmarsh together with $\#\cA_{\lambda}(X)\asymp\log X$.
The almost-all lower bound $R(x)\gg h$ is obtained by restricting to $a\leqslant h/2$ and applying the Guth--Maynard~\cite[Cor.~1.4]{GuthMaynard2024} asymptotic for primes in almost all short intervals (with a stretched-exponential exceptional set).
To pass from representations to elements we combine Cauchy--Schwarz, which relates $R(x)$, $S(x)$ and the local second moment $Q(x)$, with a uniform bound $Q(x)\ll h$.
The latter is proved using Selberg's upper-bound sieve for prime pairs together with Chen--Ding--Xu--Zhai's estimate controlling the contribution of small prime divisors in differences $a_1-a_2$ with $a_1,a_2\in\cA_{\lambda}(X)$.

\medskip

\noindent\emph{Organization of the paper.} Section~2 establishes the first-moment estimate for the local representation count $R(x)$ in almost all short intervals. Section~3 proves a uniform local mean-square bound, leading to the required second-moment control. In Section~4 we combine these inputs with Cauchy--Schwarz to deduce Theorems~\ref{thm:main-elements} and~\ref{thm:main-rep}. Finally, Section~5 proves Theorem~\ref{thm:erdos-short-interval} on large short-interval multiplicities in the Romanoff model.

\section{First moment in short intervals}

We start by exchanging the order of summation in the definition of $R(x)$. From \eqref{eq:def-f} and \eqref{eq:def-R},
\begin{equation}\label{eq:R-as-pi}
R(x)\ =\ \sum_{a\in\cA_{\lambda}(X)} \left( \pi(x+h-a)-\pi(x-a) \right),
\end{equation}
where $\pi$ denotes the prime counting function. To obtain a lower bound for $R(x)$ for almost all $x$, it is convenient to restrict to those $a\leqslant h/2$. Set
\[
\cA_{\lambda}(X;h)\ :=\ \cA_{\lambda}(X)\cap\left[1,\frac{h}{2}\right],
\qquad
A_h\ :=\ \#\cA_{\lambda}(X;h),
\]
so that $A_h\asymp \log h$ by the counting-function estimate $A_{\lambda}(x)\asymp\log x$ (see \cite{ChenXu2024} and \cite{DingZhai}), and hence $A_h\asymp \log X$ in the range $h=X^{\theta}$. Restricting \eqref{eq:R-as-pi} to $\cA_{\lambda}(X;h)$ gives
\begin{equation}\label{eq:R-restrict}
R(x)\ \geqslant\ \sum_{a\in\cA_{\lambda}(X;h)} \left( \pi(x+h-a)-\pi(x-a) \right).
\end{equation}

The lower bound for the prime counts in \eqref{eq:R-restrict} is supplied by Guth--Maynard's almost-all short-interval asymptotic.

\begin{lemma}[Guth--Maynard, almost-all short intervals]\label{lem:GM}
Fix $\varepsilon>0$. Let $X\geqslant 3$ and let $y$ satisfy
\[
 X^{2/15+\varepsilon}\ \leqslant\ y\ \leqslant\ X^{0.99}.
\]
Then for all but $O\left(X\exp\left(-\left(\log X\right)^{1/4}\right)\right)$ integers $t\in[X,2X]$ one has
\[
 \pi(t+y)-\pi(t)\ =\ \frac{y}{\log t}\ +\ O_{\varepsilon}\left(y\exp\left(-\left(\log t\right)^{1/4}\right)\right).
\]
This is \cite[Cor.~1.4]{GuthMaynard2024}, restricted to integer $t$.
\end{lemma}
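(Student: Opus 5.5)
The lemma is a direct specialization of \cite[Cor.~1.4]{GuthMaynard2024}, so the plan is to recall that corollary in its native form and check that the stated version follows. Guth--Maynard establish a zero-density estimate for the Riemann zeta function. From it they deduce that, for $X^{2/15+\varepsilon}\leqslant y\leqslant X$, the asymptotic
\[
\pi(t+y)-\pi(t)\ =\ \frac{y}{\log t}\left(1+O\left(\exp\left(-(\log t)^{1/4}\right)\right)\right)
\]
holds for all $t\in[X,2X]$ outside an exceptional set. That set is either of Lebesgue measure $O(X\exp(-(\log X)^{1/4}))$ or is counted over integers, depending on the formulation. Our range $y\leqslant X^{0.99}$ lies inside theirs, and $(\log t)^{1/4}\asymp(\log X)^{1/4}$ for $t\in[X,2X]$. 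The error term therefore matches what we claim, with implied constant depending on $\varepsilon$. If the corollary is stated with $\psi$ or $\theta$ rather than $\pi$, we pass to $\pi$ by partial summation. This costs $O(\sqrt{t})$ from prime powers and a relative error $O(y/(t\log t))$ from the variation of $\log$ over the interval, and both are negligible against $y\exp(-(\log t)^{1/4})$ because $y\geqslant X^{2/15}$.

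The only step needing care is converting a measure bound on the exceptional set into a count of exceptional integers. If the corollary gives a measure bound, I would use a standard smoothing argument. Suppose an integer $t$ fails the asymptotic with error $2\eta y/\log t$, where $\eta:=\exp(-(\log X)^{1/4})$. For any real $u\in[t,t+\eta y]$ we have
\[
\pi(u+y)-\pi(u)\ =\ \pi(t+y)-\pi(t)+O\left(\frac{\eta y}{\log X}\right)
\]
by Brun--Titchmarsh, and $y/\log u$ differs from $y/\log t$ negligibly. Hence each such $u$ also fails the asymptotic, with error at least $\eta y/\log u$. So every bad integer contributes a set of measure at least $\eta y$ to the real exceptional set, up to harmless overlaps. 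The intervals $[t,t+\eta y]$ overlap at most $O(1+\eta y)$ times, so the number of bad integers is $O(\text{measure}/(\eta y)\cdot(1+\eta y))$, which is $O(\text{measure})$ when $\eta y\geqslant1$; that holds here since $y\geqslant X^{2/15}$.

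To absorb the loss of a constant in $\eta$, I would apply the real-variable corollary with $\exp(-2(\log t)^{1/4})$-type savings. Alternatively, I would note that Guth--Maynard's method yields any fixed exponent below their threshold, so the saving is $\exp(-c(\log X)^{1/4})$ with a harmless change of constant. The $\varepsilon$-dependence stays in the $O_\varepsilon$, which is how the lemma is phrased.

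I expect no real obstacle beyond this bookkeeping. The main point to verify is the exact shape of the exceptional-set bound and error term in \cite[Cor.~1.4]{GuthMaynard2024}, namely the exponent $1/4$ and the constant in the exponential. I would treat the statement as quoted, adjusting constants $c_\varepsilon$ downstream if necessary.
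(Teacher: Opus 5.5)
Your approach is the same as the paper's. The lemma is just \cite[Cor.~1.4]{GuthMaynard2024} quoted, and the paper justifies the integer version only with the words ``restricted to integer $t$''. If the source bounds the exceptional set in measure, that phrase hides a real step. Your Brun--Titchmarsh smoothing, combined with the covering bound $\#B\cdot L\leqslant (L+1)\,|E|$ (with $B$ the bad integers, $E$ the real exceptional set, and intervals of length $L\geqslant 1$), is the right way to carry it out, and it is more than the paper supplies. Two of your bookkeeping steps are wrong as written, though both are easy to repair.

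First, in the passage from $\psi$ to $\pi$, a prime-power cost of $O(\sqrt t)$ is \emph{not} negligible against $y\exp(-(\log t)^{1/4})$ when $y$ is near $X^{2/15}$, because $\sqrt t\asymp X^{1/2}$. You have to count prime powers inside the window instead. The interval $(t,t+y]$ contains $\ll y/\sqrt t+\log t$ perfect powers, so they contribute $\ll (y/\sqrt t+\log t)\log t$ to $\psi(t+y)-\psi(t)$, and that is negligible.

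Second, Brun--Titchmarsh over $[t,t+\eta y]$ gives a perturbation of up to $4\eta y/\log(\eta y)\approx 30\,\eta y/\log X$. This exceeds the margin $\eta y/\log t$ between your thresholds $2\eta y/\log t$ and $\eta y/\log u$. Moreover, the threshold defining the real exceptional set carries Guth--Maynard's own implied constant, not $1$. There are two fixes:
\begin{itemize}
\item shorten the intervals to $[t,t+\delta\eta y]$ for a small fixed $\delta$; or
\item more simply, prove only what the lemma actually claims, namely absolute error $O_\varepsilon(y\eta)$. Against that, the perturbation $O(\eta y/\log X)$ is negligible, with the integer threshold taken as a constant multiple of the real one.
\end{itemize}
Either way nothing is lost in $\eta$: the bad integers number at most $2|E|$, and only the implied constant in the error term grows, which the $O_\varepsilon$ absorbs. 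So you do not need a saving $\exp(-2(\log t)^{1/4})$, which the corollary does not provide. Your fallback $\exp(-c(\log X)^{1/4})$ would not give the lemma as stated. Finally, you should discard the $O(1+\eta y)$ integers $t$ near $2X$ whose interval leaves $[X,2X]$.
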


\medskip

\noindent We now apply \cref{lem:GM} to the shifted intervals in \eqref{eq:R-restrict} and take a union bound over $a\in\cA_{\lambda}(X;h)$.

\begin{proposition}[First moment lower bound]\label{prop:first-moment}
Fix $\varepsilon>0$. Let $\theta$ satisfy $2/15+\varepsilon\leqslant\theta<0.99$ and set $h=X^{\theta}$. Then there exists a set
$\mathcal{E}\subset[X,2X]\cap\Z$ with
\[
\#\mathcal{E}\ \ll_{\varepsilon,\lambda}\ X\exp\left(-c_{\varepsilon}(\log X)^{1/4}\right)
\]
such that
\[
R(x)\ \gg\ h
\]
for all $x\in([X,2X]\cap\Z)\setminus\mathcal{E}$.
\end{proposition}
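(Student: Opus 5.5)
We start from the restricted inequality \eqref{eq:R-restrict} and reduce the problem to applying \cref{lem:GM} once for each shift $a\in\cA_{\lambda}(X;h)$. For a fixed $a$, the count $\pi(x+h-a)-\pi(x-a)$ is the number of primes in $(t,t+h]$ with $t=x-a$. Since $a\leqslant h/2\leqslant X^{0.99}/2$, the point $t$ lies in $[X-h/2,2X]$. This is slightly larger than the range $[X,2X]$ in \cref{lem:GM}. To deal with this, we apply the lemma with base $X'=X/2$, which gives the same result for all integers $t\in[X/2,X]\cup[X,2X]$ after applying it at the two dyadic scales $X/2$ and $X$. We take $y=h$. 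The condition $X'^{2/15+\varepsilon'}\leqslant h\leqslant X'^{0.99}$ holds for large $X$ with $\varepsilon'=\varepsilon/2$, because $h=X^\theta$ and $\theta\geqslant 2/15+\varepsilon$ strictly exceeds $2/15+\varepsilon/2$ on a log scale. The upper end needs a little care, since $X^{\theta}\leqslant (X/2)^{0.99}$ requires $\theta<0.99$ and $X$ large; this is where the strict inequality $\theta<0.99$ in the hypothesis is used.

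For each $a$, let $\mathcal{B}_a$ be the set of integers $t\in[X/2,2X]$ at which the asymptotic of \cref{lem:GM} fails. Then $\#\mathcal{B}_a\ll X\exp(-(\log X/2)^{1/4})\ll X\exp(-\tfrac12(\log X)^{1/4})$. Define
\[
\mathcal{E}:=\bigcup_{a\in\cA_{\lambda}(X;h)}\left\{x\in[X,2X]\cap\Z:\ x-a\in\mathcal{B}_a\right\}.
\]
The translation $x\mapsto x-a$ is injective, so each set in the union has size at most $\#\mathcal{B}_a$. By the union bound and $A_h\ll\log X$ from \eqref{eq:sizeA},
\[
\#\mathcal{E}\ll (\log X)\,X\exp\left(-\tfrac12(\log X)^{1/4}\right)\ll X\exp\left(-c(\log X)^{1/4}\right),
\]
for any fixed $c<1/2$, since the factor $\log X$ is absorbed by the stretched exponential. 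The dependence on $\lambda$ enters only through the implied constant in $A_h\ll\log X$.

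For $x\notin\mathcal{E}$, every shift $a\in\cA_{\lambda}(X;h)$ satisfies
\[
\pi(x+h-a)-\pi(x-a)=\frac{h}{\log(x-a)}+O\left(h e^{-(\log X)^{1/4}/2}\right)\geqslant \frac{h}{2\log X}
\]
for $X$ large, because $\log(x-a)\leqslant\log 2X$. Summing over the $A_h\gg\log X$ shifts gives $R(x)\geqslant A_h\cdot h/(2\log X)\gg h$. This uses the lower bound $A_h\gg\log h\asymp\theta\log X$, which holds because $h/2\leqslant 2X$ and $A_\lambda(h/2)\asymp\log h$; the implied constant therefore depends on $\theta$ as well.

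The main obstacle is the bookkeeping at the boundary. The shifted points $t=x-a$ fall slightly outside $[X,2X]$, and the parameters of \cref{lem:GM} must be re-verified at the scale $X/2$ with $\varepsilon$ halved. A genuinely subtle point is that the lower bound $A_h\gg\log X$ must hold for the counting function of $\cA_\lambda$ truncated at $h/2$. This follows from $A_\lambda(x)\asymp\log x$ applied at $x=h/2$.
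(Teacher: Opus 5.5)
Your proposal is correct and takes essentially the same approach as the paper. You restrict to $a\leqslant h/2$, apply \cref{lem:GM} at the two dyadic scales $X/2$ and $X$ with $\varepsilon$ halved, take a union bound over the $A_h\ll\log X$ shifts, and sum using $A_h\gg\log h=\theta\log X$. The only cosmetic difference is that you bound each shifted prime count below by $h/(2\log X)$, rather than summing the asymptotics and then estimating the main term.
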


\begin{proof}
Recall \eqref{eq:R-restrict}. Since $h=X^{\theta}$ with $\theta\in(0,1)$, for $X$ sufficiently large we have $h/2\leqslant X/2$, and hence
\[
\cA_{\lambda}(X;h)=\cA_{\lambda}\cap\left[1,\frac{h}{2}\right]
\qquad\text{and}\qquad
A_h\asymp \log h.
\]
Fix $a\in\cA_{\lambda}(X;h)$ and set $t:=x-a$. For $x\in[X,2X]$ we have $t\in[X/2,2X]$, so $t$ lies in the union of the two dyadic blocks $[X/2,X]$ and $[X,2X]$.

We apply \cref{lem:GM} twice: first with the scale parameter $X' := X/2$ to control $\pi(t+h)-\pi(t)$ for $t\in[X/2,X]$, and then with the scale parameter $X' := X$ to control it for $t\in[X,2X]$. Since $\theta\geqslant 2/15+\varepsilon$, for $X$ sufficiently large we have $(X/2)^{2/15+\varepsilon/2}\leqslant h$. Moreover, since $\theta<0.99$ for $X$ sufficiently large we also have $h=X^{\theta}\leqslant (X/2)^{0.99}$. Consequently the hypotheses of \cref{lem:GM} apply (with $\varepsilon$ replaced by $\varepsilon/2$) on both dyadic blocks. We therefore obtain an exceptional set $\mathcal{E}_a\subset[X,2X]\cap\Z$ with
\[
\#\mathcal{E}_a\ \ll_{\varepsilon}\ X\exp\left(-c_{\varepsilon}(\log X)^{1/4}\right)
\]
such that for all $x\in([X,2X]\cap\Z)\setminus\mathcal{E}_a$ we have
\begin{equation}\label{eq:GM-shifted}
\pi(x+h-a)-\pi(x-a)\ =\ \frac{h}{\log(x-a)}\ +\ O_{\varepsilon}\left(h\exp\left(-c_{\varepsilon}(\log X)^{1/4}\right)\right).
\end{equation}

Let $\mathcal{E}:=\bigcup_{a\in\cA_{\lambda}(X;h)}\mathcal{E}_a$. Using $A_h\ll \log h\ll \log X$, we obtain
\[
\#\mathcal{E}\ \ll_{\varepsilon,\lambda}\ A_h\,X\exp\left(-c_{\varepsilon}(\log X)^{1/4}\right)
\ \ll_{\varepsilon,\lambda}\ X\exp\left(-c'(\varepsilon)(\log X)^{1/4}\right)
\]
for some (slightly smaller) $c'(\varepsilon)>0$.

For $x\in([X,2X]\cap\Z)\setminus\mathcal{E}$, summing \eqref{eq:GM-shifted} over $a\in\cA_{\lambda}(X;h)$ yields
\[
\begin{aligned}
R(x)
&\geqslant\ \sum_{a\in\cA_{\lambda}(X;h)}\left(\pi(x+h-a)-\pi(x-a)\right)
\\
&=\ h\sum_{a\in\cA_{\lambda}(X;h)}\frac{1}{\log(x-a)}\ +\ O_{\varepsilon}\left(A_h h\exp\left(-c_{\varepsilon}(\log X)^{1/4}\right)\right).
\end{aligned}
\]
Since $x-a\asymp X$ uniformly for $x\in[X,2X]$ and $a\leqslant h/2\leqslant X/2$, the main term satisfies
\[
\sum_{a\in\cA_{\lambda}(X;h)}\frac{1}{\log(x-a)}\ \asymp\ \frac{A_h}{\log X}\ \asymp\ \frac{\log h}{\log X},
\]
and the error term is negligible. Since $h=X^{\theta}$, we have $\log h=\theta \log X$, hence $R(x)\gg h$ for all $x\notin\mathcal{E}$.
\end{proof}

\section{Local mean-square bound}

In this section we establish a uniform bound of the correct scale for the local second moment $Q(x)$ defined in \eqref{eq:def-R}. Expanding via the definition \eqref{eq:def-f} of $f$ gives
\begin{equation}\label{eq:Q-expand}
Q(x)\ =\ \sum_{a_1,a_2\in\cA_{\lambda}(X)}\ \sum_{x<n\leqslant x+h} \1_{\PP}(n-a_1)\,\1_{\PP}(n-a_2).
\end{equation}
Changing variables $m=n-a_1$ yields
\begin{equation}\label{eq:Q-shift}
Q(x)\ =\ \sum_{a_1,a_2\in\cA_{\lambda}(X)}\ \sum_{x-a_1<m\leqslant x+h-a_1} \1_{\PP}(m)\,\1_{\PP}\left(m+(a_1-a_2)\right),
\end{equation}
where the inner interval has length exactly $h$.

It is convenient to separate the diagonal and off-diagonal contributions by writing
\[
Q(x)=Q_{\mathrm{diag}}(x)+Q_{\mathrm{off}}(x),
\qquad
Q_{\mathrm{diag}}(x):=\sum_{a\in\cA_{\lambda}(X)}\sum_{x-a<m\leqslant x+h-a}\1_{\PP}(m),
\] 
where $Q_{\mathrm{off}}(x)$ denotes the remaining sum over pairs $(a_1,a_2)\in\cA_{\lambda}(X)^2$ with $a_1\ne a_2$.

\medskip

\noindent We proceed in two steps. We first record a uniform upper-bound sieve estimate for prime pairs in intervals of length $h$, with the dependence on the shift captured by the prime-pair singular series. We then bound the average size of this singular series over differences in $\cA_{\lambda}(X)$.

\subsection*{Prime pairs in short intervals}

We next bound the off-diagonal contribution in \eqref{eq:Q-shift}. The diagonal terms $a_1=a_2$ correspond to counting a single prime in an interval of length $h$ and will be treated separately below. For $a_1\ne a_2$ we are led to sums of the form
\[
\sum_{y<m\leqslant y+h} \1_{\PP}(m)\,\1_{\PP}(m+\Delta),
\]
with length $h$ and a nonzero shift $\Delta$ (in \eqref{eq:Q-shift} one has $y=x-a_1$ and $\Delta=a_1-a_2$). We bound these uniformly in $y$ using Selberg's upper-bound sieve; the expected dependence on $\Delta$ is encoded by the usual prime-pair singular series.

\begin{definition}[Prime-pair singular series]\label{def:S2}
For $\Delta\in\Z$ with $\Delta\ne 0$ define
\[
\mathfrak{S}_2(\Delta)\ :=\
\begin{cases}
2C_2\displaystyle\prod_{\substack{p\mid \Delta\\ p>2}}\frac{p-1}{p-2}, & \Delta\ \text{even},\\[1.2ex]
0, & \Delta\ \text{odd},
\end{cases}
\qquad
C_2\ :=\ \displaystyle\prod_{p>2}\left(1-\frac{1}{(p-1)^2}\right).
\]
\end{definition}

\noindent The factor $\mathfrak{S}_2(\Delta)$ records the local congruence obstructions for the pair of linear forms $m$ and $m+\Delta$ and is the standard weight appearing in upper-bound sieve estimates for prime pairs. We never use $\mathfrak{S}_2(0)$: the case $\Delta=0$ is exactly the diagonal $a_1=a_2$ and will be handled by a one-dimensional prime bound.

\begin{lemma}[Selberg upper-bound sieve for prime pairs in short intervals]\label{lem:pair-short}
Let $h\geqslant 2$ and let $\Delta\in\Z\setminus\{0\}$. Uniformly in real $y$ one has
\[
\sum_{y<m\leqslant y+h} \1_{\PP}(m)\,\1_{\PP}(m+\Delta)\ \ll\ \frac{h}{(\log h)^2}\,\mathfrak{S}_2(\Delta)\ +\ 1,
\]
where $\mathfrak{S}_2$ is as in \cref{def:S2}.
Moreover, uniformly in real $y$,
\[
\sum_{y<m\leqslant y+h} \1_{\PP}(m)\ \ll\ \frac{h}{\log h}.
\]
\end{lemma}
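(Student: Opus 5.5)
We prove the pair bound with Selberg's upper-bound sieve applied to the polynomial $F(m)=m(m+\Delta)$ on the interval $(y,y+h]$, at sieve level $z=h^{1/4}$ (any fixed small power of $h$ works). First dispose of trivial cases. If $\Delta$ is odd, one of $m,m+\Delta$ is even, so the left-hand side is at most $2$, which is $\ll 1$. Likewise, the number of $m$ for which $m$ or $m+\Delta$ is a prime $\leqslant z$ is $O(1)$ in the relevant configurations: there are at most $2\pi(z)$ such $m$, and this is absorbed because $\pi(z)\ll h/(\log h)^2$ and $\mathfrak{S}_2(\Delta)\gg 1$ for even $\Delta$. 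In fact this is the reason for the ``$+1$'' term. For all remaining $m$ counted, $F(m)$ has no prime factor $p<z$, so the count is at most $\#\{m\in(y,y+h]:\ (F(m),P(z))=1\}+O(z)$.

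Next we set up the sieve. For squarefree $d\mid P(z)$, the number of $m$ in the interval with $d\mid F(m)$ is $h\,\omega(d)/d+O(\omega(d))$. Here $\omega$ is multiplicative with $\omega(p)=2$ if $p\nmid 2\Delta$, $\omega(p)=1$ if $p\mid\Delta$, and $\omega(2)=1$ for even $\Delta$. This is a sieve of dimension $2$ with uniformly bounded remainders $|r_d|\leqslant \omega(d)\leqslant \tau(d)$. Selberg's upper-bound sieve (e.g.\ Halberstam--Richert, Theorem 5.7, or Friedlander--Iwaniec) with $D=z^2=h^{1/2}$ then gives
\[
\#\{m:(F(m),P(z))=1\}\ \ll\ h\prod_{p<z}\Bigl(1-\frac{\omega(p)}{p}\Bigr)+\sum_{d\leqslant D,\ d\mid P(z)}\mu^2(d)\,3^{\nu(d)}\tau(d).
\]
The remainder sum is $\ll D(\log D)^{O(1)}\ll h^{1/2+o(1)}$, which is negligible against $h/(\log h)^2$.

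The main step is evaluating the Euler product. Compare $\prod_{p<z}(1-\omega(p)/p)$ with $\prod_{p<z}(1-1/p)^2\asymp(\log z)^{-2}\asymp(\log h)^{-2}$ by Mertens. For $p\nmid 2\Delta$ the local ratio $(1-2/p)(1-1/p)^{-2}=1-1/(p-1)^2$ yields (a partial product of) $C_2$. For odd $p\mid\Delta$ the ratio $(1-1/p)^{-1}$ must be combined with the missing factor $(1-1/(p-1)^2)^{-1}$, giving $(p-1)/(p-2)$. The prime $2$ contributes a factor $2$. Primes $p\mid\Delta$ with $p\geqslant z$ only enlarge $\mathfrak{S}_2(\Delta)$, since each such factor exceeds $1$, and the truncated product over $p\nmid\Delta$ differs from $C_2$ by a factor $1+O(1/z)$. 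Hence the main term is $\ll \mathfrak{S}_2(\Delta)\,h/(\log h)^2$, uniformly in $\Delta$ and $y$. This bookkeeping is where care is needed, but it is standard; the key point is that no condition relating $|\Delta|$ and $h$ is required, because only divisibility of $\Delta$ by primes below $z$ enters.

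The one-dimensional bound is the Brun--Titchmarsh inequality, $\pi(y+h)-\pi(y)\leqslant 2h/\log h+O(h/(\log h)^2)$, valid uniformly in $y$ (Montgomery--Vaughan). It also follows from the same Selberg sieve with $F(m)=m$, $\omega\equiv1$, together with the trivial bound $O(z)$ for primes below $z$. I expect no real obstacle. The only delicate point is verifying the uniformity in $\Delta$ in the singular-series comparison, and checking that the small primes and the odd-$\Delta$ case are absorbed into the $+1$ and main terms.
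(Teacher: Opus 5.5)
Your proposal is correct and follows the same route as the paper. The paper applies Selberg's upper-bound sieve to $m(m+\Delta)$ (citing it from Halberstam--Richert, where you instead write out the comparison of $\prod_{p<z}(1-\omega(p)/p)$ with $\mathfrak{S}_2(\Delta)/(\log h)^2$), uses the parity argument for odd $\Delta$, and uses Brun--Titchmarsh for the one-prime bound. One small correction: the small-prime terms are $O(\pi(z))$, not $O(1)$, and for even $\Delta$ they are absorbed into the main term, so the $+1$ is needed only for odd $\Delta$, where $\mathfrak{S}_2(\Delta)=0$.
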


\noindent The first estimate is a standard application of Selberg's upper-bound sieve to the pair of linear forms $m$ and $m+\Delta$ on an interval of length $h$; see Halberstam--Richert~\cite[Ch.~5]{HalberstamRichert}. The implied constant is absolute, and the bound is uniform in $y$ since the local sieve factors depend only on $\Delta$ modulo primes.

The additive term $+1$ accounts for the parity obstruction: if $\Delta$ is odd then $m$ and $m+\Delta$ have opposite parity, so simultaneous primality can occur only when one of the two numbers equals $2$, giving a uniform bound $\leqslant 1$ for every $y$. In our later application, the total contribution of these remainders after summing over $(a_1,a_2)\in\cA_{\lambda}(X)^2$ is $O((\log X)^2)$ by \eqref{eq:sizeA}.

The second estimate follows from Brun--Titchmarsh when $y\geqslant h$. If $y<h$, then every prime counted satisfies $2\leqslant m\leqslant y+h<2h$, so the left-hand side is at most $\pi(2h)\ll h/\log h$; see e.g.~\cite[Ch.~3]{HalberstamRichert}.
\medskip

\noindent Applying \cref{lem:pair-short} to \eqref{eq:Q-shift} with $y=x-a_1$ and $\Delta=a_1-a_2$ bounds each off-diagonal summand by $\frac{h}{(\log h)^2}\,\mathfrak{S}_2(\Delta)+1$. Summing over $a_1\ne a_2$ therefore reduces the off-diagonal contribution to $Q(x)$ to an average of $\mathfrak{S}_2(a_1-a_2)$ over differences in $\cA_{\lambda}(X)$, which we control next.

\subsection*{Averaging the singular series over differences in \texorpdfstring{$\cA_{\lambda}(X)$}{A-lambda(X)}}

To make the sieve bound in \cref{lem:pair-short} effective after summing over $a_1,a_2\in\cA_{\lambda}(X)$, we need to bound the average size of $\mathfrak{S}_2(a_1-a_2)$ for $a_1\ne a_2$. Since $\mathfrak{S}_2(\Delta)$ is essentially a product over primes dividing $\Delta$, large values can only occur when $\Delta$ has many \emph{small} prime factors. For the set $\cA_{\lambda}$, such small prime factors in differences are strongly controlled, via the following estimate.

\begin{lemma}[Chen--Ding--Xu--Zhai, small prime factors in differences]\label{lem:DZ-diff}
Let $x\geqslant 3$ and write $P^{+}(n)$ for the largest prime factor of $n$ (with the convention $P^{+}(1)=1$). Then
\[
\sum_{\substack{a_1,a_2\in\cA_{\lambda}\\ a_1<a_2\leqslant x}}
\ \ \sum_{\substack{d\mid (a_2-a_1)\\ 2\nmid d\\ P^{+}(d)<\log x}}
\frac{\mu^2(d)}{d}
\ \ll_{\lambda}\ (\log x)^2,
\]
as proved in Chen--Xu \cite{ChenXu2024} and Ding--Zhai~\cite[(5)]{DingZhai}.
\end{lemma}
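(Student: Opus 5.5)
We must prove Lemma (Chen--Ding--Xu--Zhai): the double sum over pairs $a_1<a_2\le x$ in $\cA_\lambda$ of $\sum_{d\mid a_2-a_1,\,d\text{ odd squarefree},\,P^+(d)<\log x}1/d$ is $\ll(\log x)^2$.

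The plan is to interchange the order of summation and count, for each admissible $d$, the pairs $a_1<a_2\le x$ in $\cA_\lambda$ with $a_1\equiv a_2 \pmod d$:
\[
\sum_{\substack{2\nmid d,\ P^+(d)<\log x}}\frac{\mu^2(d)}{d}\,N_d(x),
\qquad
N_d(x):=\#\{(a_1,a_2)\in\cA_\lambda^2:\ a_1<a_2\le x,\ d\mid a_2-a_1\}.
\]
The trivial bound is $N_d(x)\le A_\lambda(x)^2\ll(\log x)^2$. Used for every $d$, it only gives $(\log x)^2\prod_{p<\log x}(1+1/p)\ll(\log x)^2\log\log x$. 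So we have to save the factor $\log\log x$, and this requires showing that $N_d(x)$ is substantially smaller than $(\log x)^2$ once $d$ is large in a multiplicative sense.

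The key structural input is that $2$ has order $\operatorname{ord}_d(2)$ modulo odd $d$. Every $a\in\cA_\lambda$ is a sum of $s$ powers $2^{e_i}$ whose exponents $e_i=\lfloor k_i^{r_i}\rfloor$ (with the modified last exponent) are sparse. Reducing modulo $d$, the residue of $a$ depends only on the vector of exponents modulo $\ell_d:=\operatorname{ord}_d(2)$. I would bound $N_d(x)$ by $\sum_{c\bmod d}\#\{a\le x:a\equiv c\}^2$ and aim for an estimate of the form
\[
N_d(x)\ \ll\ \frac{(\log x)^2}{\ell_d^{\,\delta}}+\log x
\]
for some $\delta>0$, or more crudely a bound depending on how the exponents equidistribute modulo $\ell_d$. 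Two facts should make this plausible. First, a residue class modulo $d$ determines the sum of $s$ powers of $2$ only up to boundedly many patterns modulo $\ell_d$ once $\ell_d$ is large. Second, by the balancing relation \eqref{eq:lambda-balance}, the number of $s$-tuples of exponents up to $\log x/\log 2$ is $\asymp\log x$, and the tuples that fall into a prescribed pattern modulo $\ell_d$ form a proportion at most about $\ell_d^{-\delta}$. The second fact needs equidistribution of $\lfloor k^{r}\rfloor$ modulo $\ell$, or at least an upper bound for its concentration, which van der Corput or elementary counting gives for $k^{r}$ with $r>1$.

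With this bound in hand, it remains to show that
\[
\sum_{P^+(d)<\log x}\frac{\mu^2(d)}{d}\,\ell_d^{-\delta}\ \ll\ 1 .
\]
This is the classical Romanoff/Erdős input: $\sum_d \frac{1}{d\,\operatorname{ord}_d(2)^{\delta}}<\infty$, which follows from grouping $d$ by the value of $\ell=\operatorname{ord}_d(2)$. All such $d$ divide $2^\ell-1$, and $\sum_{d\mid 2^\ell-1}1/d\ll\log\log\ell$. The term $\log x$ in the bound for $N_d(x)$ contributes $\log x\cdot\log\log x$ in total, which is acceptable.

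I expect the main obstacle to be the uniform concentration bound for $N_d(x)$. It needs control of sums of $s$ lacunary powers of $2$ in residue classes, uniformly in $d$ up to size $e^{O(\log x)}$, and it needs handling of the coincidences between different tuples giving the same residue. This is precisely where Chen--Xu and Ding--Zhai do the real work. Here I would simply invoke their estimate (5), which is how the paper cites it.
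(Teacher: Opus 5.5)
Your final step, invoking Chen--Xu and Ding--Zhai's estimate (5), is all the paper does: it gives no argument for this lemma beyond the citation. So your justification coincides with the paper's. The reduction you sketch beforehand is not a proof, and one of its steps is wrong as written.

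You obtain $\sum_d \mu^2(d)\,d^{-1}\mathrm{ord}_d(2)^{-\delta}\ll 1$ by grouping $d$ by $\ell=\mathrm{ord}_d(2)$ and bounding each group by $\sum_{d\mid 2^\ell-1}1/d$. That replaces $\{d:\mathrm{ord}_d(2)=\ell\}$ by $\{d:\mathrm{ord}_d(2)\mid\ell\}$, so each $d$ is counted once for every multiple of its order. Every inner sum is $\geqslant 1$, so the resulting bound is at least $\sum_\ell \ell^{-\delta}$. That is not $O(1)$ for $\delta\leqslant 1$, which is exactly the range you allow (``some $\delta>0$''). The correct argument is Erd\H{o}s's cumulative one,
\[
\sum_{\substack{2\nmid d\\ \mathrm{ord}_d(2)\leqslant L}}\frac{\mu^2(d)}{d}\ \leqslant\ \prod_{p\mid \prod_{m\leqslant L}(2^m-1)}\left(1+\frac1p\right)\ \ll\ \log(2L),
\]
using $\prod_{m\leqslant L}(2^m-1)<2^{L^2}$. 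Partial summation then gives convergence for every $\delta>0$.

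The more serious issue is that the bound $N_d(x)\ll(\log x)^2\,\mathrm{ord}_d(2)^{-\delta}+\log x$ carries the whole content of the lemma, and you leave it unproved. Your ``first fact'' behind it is also false uniformly in $d$.

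Take $d=p_1\cdots p_t$ with $\mathrm{ord}_{p_i}(2)=q_i$ distinct primes. For example, let $p_i$ be any prime factor of $2^{q_i}-1$ with $2^{q_i}<\log x$. Such $d$ are odd, squarefree, have $P^{+}(d)<\log x$, and allow $t\to\infty$. Then $\ell_d=q_1\cdots q_t$. The residue of $\sum_j 2^{e_j}$ mod $p_i$ depends only on the multiset $\{e_j \bmod q_i\}$. Permuting the exponents independently modulo each $q_i$ and gluing by the Chinese remainder theorem therefore gives up to $(s!)^t$ distinct patterns mod $\ell_d$ in one residue class mod $d$.

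So any real proof must use the sparsity of the exponent sequences $\lfloor k^{r_i}\rfloor$ together with the weights $1/d$, not pattern counting alone; that work is done in the cited papers. As it stands, your argument is valid only through the citation, exactly as the paper's is.
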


\noindent We now convert \cref{lem:DZ-diff} into a bound for the average singular series. Up to bounded factors coming from primes $p\geqslant \log X$, one has
\[
\mathfrak{S}_2(\Delta)\ \ll\ \prod_{\substack{p\mid \Delta\\ 2<p<\log X}}\left(1+\frac{1}{p}\right),
\]
and the right-hand side expands as a divisor sum supported on odd $d$ with $P^{+}(d)<\log X$.

\begin{lemma}[Average singular series over differences]\label{lem:avg-S2}
For $X\geqslant 3$ one has
\[
\sum_{\substack{a_1,a_2\in\cA_{\lambda}(X)\\ a_1\ne a_2}}
\mathfrak{S}_2(a_1-a_2)\ \ll_{\lambda}\ (\log X)^2.
\]
\end{lemma}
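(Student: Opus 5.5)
We will bound $\mathfrak{S}_2(\Delta)$ pointwise by a short divisor sum over odd squarefree $d\mid\Delta$ with $P^{+}(d)<\log(2X)$, and then appeal to \cref{lem:DZ-diff} with $x=2X$. By symmetry of $\mathfrak{S}_2$ under $\Delta\mapsto-\Delta$, the sum over ordered pairs $a_1\ne a_2$ is twice the sum over $a_1<a_2\leqslant 2X$. We may also discard odd differences, since there $\mathfrak{S}_2=0$.

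\emph{Step 1: pointwise reduction.} Fix an even $\Delta\ne0$ with $|\Delta|\leqslant 2X$ and set $z:=\log(2X)$. We have
\[
\frac{p-1}{p-2}=\Bigl(1+\frac1p\Bigr)\Bigl(1+\frac{1}{p(p-2)}\Bigr)\Bigl(1-\frac{1}{p^2}\Bigr)^{-1}\cdot\frac{p^2-1}{p^2}\cdot\frac{p}{p+1}\cdot\frac{(p-1)}{(p-2)}\cdots,
\]
or more simply $\frac{p-1}{p-2}=1+\frac1{p-2}\leqslant (1+\frac1p)(1+\frac{3}{p^2})$ for $p\geqslant 3$. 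The product of the factors $(1+3/p^2)$ is absolutely bounded. For the large primes $p\geqslant z$ dividing $\Delta$, their number is at most $\log|\Delta|/\log z\ll \log X/\log\log X$. Hence
\[
\prod_{p\mid\Delta,\,p\geqslant z}\Bigl(1+\frac1p\Bigr)\leqslant \exp\Bigl(\frac{1}{z}\cdot\frac{2\log X}{\log\log X}\Bigr)\ll 1 .
\]
This is the bound asserted before the lemma. The remaining factor expands as
\[
\prod_{\substack{p\mid\Delta\\2<p<z}}\Bigl(1+\frac1p\Bigr)=\sum_{\substack{d\mid\Delta,\ 2\nmid d\\ P^{+}(d)<z}}\frac{\mu^2(d)}{d}.
\]
Thus $\mathfrak{S}_2(\Delta)\ll \sum_{d}\mu^2(d)/d$ over exactly the divisor range appearing in \cref{lem:DZ-diff}.

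\emph{Step 2: summation.} Summing over $a_1<a_2$ in $\cA_\lambda(X)\subset\cA_\lambda\cap[1,2X]$ and applying \cref{lem:DZ-diff} with $x=2X$ gives $\ll_\lambda(\log 2X)^2\ll(\log X)^2$. This proves the claim.

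The only delicate point is the treatment of large primes in Step 1. The product over primes $p\geqslant z$ must be uniformly bounded, and this requires the cutoff $z$ to be comparable with $\log X$ rather than smaller, so that the number of large prime factors ($\ll\log X/\log\log X$) times $1/z$ stays $O(1)$. The choice $z=\log(2X)$ matches \cref{lem:DZ-diff} exactly, so there is no mismatch in the cutoffs. Odd $d$ only is handled by the definition of $\mathfrak{S}_2$, which excludes $p=2$ and absorbs the factor $2C_2$ into the constant.
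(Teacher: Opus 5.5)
Your proposal is correct and follows the paper's proof essentially step for step. You bound $\mathfrak{S}_2(\Delta)$ by $\prod_{p\mid\Delta,\,p>2}(1+1/p)$ up to a convergent product of $1+O(p^{-2})$ factors, show that the primes $p\geqslant z$ contribute $O(1)$ by counting them, expand the rest as a sum over odd squarefree $d$ with $P^{+}(d)<z$, and apply \cref{lem:DZ-diff} with $x=2X$ plus symmetry; your cutoff $z=\log(2X)$ matches the lemma exactly, whereas the paper uses $\log X$ and tacitly enlarges the cutoff, which is harmless since all terms are nonnegative.

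One slip to fix: the inequality $1+\frac{1}{p-2}\leqslant(1+\frac1p)(1+\frac{3}{p^2})$ fails at $p=3$ (it reads $2\leqslant 16/9$), and your first display is garbled. Use instead the exact identity $\frac{p-1}{p-2}=\bigl(1+\frac1p\bigr)\bigl(1+\frac{2}{(p+1)(p-2)}\bigr)$, which changes nothing in the argument.
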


\begin{proof}
Let $\Delta\in\Z$ with $\Delta\ne 0$ and $\abs{\Delta}\leqslant 2X$. Using that
\[
\frac{p-1}{p-2}\ =\ \left(1+\frac{1}{p}\right)\left(1+O\left(\frac{1}{p^2}\right)\right)
\]
uniformly for $p>2$, we obtain
\[
\mathfrak{S}_2(\Delta)\ \ll\ \prod_{\substack{p\mid \Delta\\ p>2}}\left(1+\frac{1}{p}\right).
\]
Since $\abs{\Delta}\leqslant 2X$, there are at most $\omega(\Delta)\leqslant \log(2X)/\log\log X$ prime divisors $p$ of $\Delta$ with $p\geqslant \log X$. For each such $p$ we have $(p-1)/(p-2)=1+O(1/p)=1+O(1/\log X)$, hence
\[
\sum_{\substack{p\mid \Delta\\ p\geqslant \log X}} \frac{1}{p}
\ \leqslant\ \frac{\omega(\Delta)}{\log X}
\ \ll\ \frac{1}{\log\log X}.
\]
Consequently the contribution of primes $p\geqslant \log X$ in the above product is bounded. Therefore
\[
\mathfrak{S}_2(\Delta)\ \ll\ \prod_{\substack{p\mid \Delta\\ 2<p<\log X}}\left(1+\frac{1}{p}\right)
\ =\ \sum_{\substack{d\mid \Delta\\ 2\nmid d\\ P^{+}(d)<\log X}}\frac{\mu^2(d)}{d}.
\]
Summing this bound over pairs $(a_1,a_2)\in\cA_{\lambda}(X)^2$ and noting that $a_1,a_2\leqslant 2X$ on this range, we may apply \cref{lem:DZ-diff} with $x=2X$ to get
\[
\sum_{\substack{a_1,a_2\in\cA_{\lambda}(X)\\ a_1<a_2}}
\mathfrak{S}_2(a_1-a_2)\ \ll_{\lambda}\ (\log X)^2.
\]
The stated bound follows by symmetry.
\end{proof}

\noindent We now combine the diagonal estimate (a single prime in a length-$h$ interval) with the off-diagonal estimate (prime pairs with shift $\Delta=a_1-a_2$ averaged over $\cA_{\lambda}(X)$) to obtain a uniform bound for $Q(x)$. This is the sole $L^2$-input needed in the Cauchy--Schwarz step.

\begin{proposition}[Uniform local mean-square bound]\label{prop:local-mean-square}
Let $h=X^{\theta}$ with $\theta\in(0,1)$. Then for every real $x\in[X,2X]$ one has
\[
Q(x)\ \ll\ h.
\]
The implied constant depends at most on $\lambda$ and $\theta$.
\end{proposition}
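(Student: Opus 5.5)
We split $Q(x)=Q_{\mathrm{diag}}(x)+Q_{\mathrm{off}}(x)$ as in \eqref{eq:Q-shift} and bound the two pieces separately, using \cref{lem:pair-short} for each inner sum and then averaging over $\cA_{\lambda}(X)$.

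\emph{Diagonal part.} For each $a\in\cA_{\lambda}(X)$ the inner sum counts primes in the interval $(x-a,x+h-a]$, which has length $h$. By the second estimate of \cref{lem:pair-short}, uniformly in the left endpoint $y=x-a$ (which may be small or even negative when $a$ is close to $2X$), this sum is $\ll h/\log h$. Summing over the $\#\cA_{\lambda}(X)\asymp\log X$ values of $a$ from \eqref{eq:sizeA} gives
\[
Q_{\mathrm{diag}}(x)\ll \frac{h\log X}{\log h}=\frac{h}{\theta}\ll_{\theta} h .
\]

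\emph{Off-diagonal part.} For $a_1\ne a_2$, put $y=x-a_1$ and $\Delta=a_1-a_2\neq0$. The first estimate of \cref{lem:pair-short} bounds each summand by $\frac{h}{(\log h)^2}\mathfrak{S}_2(a_1-a_2)+1$, uniformly in $y$. Summing over pairs, the additive $+1$ terms contribute $O((\log X)^2)$ by \eqref{eq:sizeA}. The main terms contribute
\[
\frac{h}{(\log h)^2}\sum_{a_1\ne a_2}\mathfrak{S}_2(a_1-a_2)\ll_{\lambda}\frac{h(\log X)^2}{(\log h)^2}=\frac{h}{\theta^2},
\]
by \cref{lem:avg-S2}. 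Since $h=X^{\theta}$, the term $(\log X)^2$ is $o(h)$, so $Q_{\mathrm{off}}(x)\ll_{\lambda,\theta} h$. Adding the two parts yields $Q(x)\ll_{\lambda,\theta}h$ for every real $x\in[X,2X]$.

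\emph{Where the difficulty lies.} The only delicate point is uniformity. The sieve bound must hold for arbitrary interval position $y$, and for large $a_1$ this $y$ can be small or negative. This is harmless, because \cref{lem:pair-short} is stated uniformly in real $y$ (its proof bounds the count by $\pi(2h)$ when $y<h$), and primes are positive so negative ranges add nothing. The genuine arithmetic input is entirely in \cref{lem:avg-S2}: it shows that the singular series averages to $O(1)$ over the roughly $(\log X)^2$ pairs, so the number of pairs exactly cancels the $(\log h)^{-2}$ saving from the sieve.
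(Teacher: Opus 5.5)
Your proposal is correct and follows the paper's proof essentially verbatim. The diagonal terms are bounded by the one-prime estimate of \cref{lem:pair-short} times $\#\cA_{\lambda}(X)\asymp\log X$, and the off-diagonal terms by the Selberg pair bound combined with \cref{lem:avg-S2}, with the $+1$ remainders contributing $O((\log X)^2)$; your remark on uniformity in the possibly negative left endpoint $y=x-a_1$ matches the justification the paper gives after \cref{lem:pair-short}.
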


\begin{proof}
Split \eqref{eq:Q-shift} into the diagonal contribution $a_1=a_2$ and the off-diagonal contribution $a_1\ne a_2$.
For the diagonal part we use the second bound in \cref{lem:pair-short} to obtain
\[
\sum_{a\in\cA_{\lambda}(X)}\ \sum_{x-a<m\leqslant x+h-a} \1_{\PP}(m)
\ \ll\ \#\cA_{\lambda}(X)\,\frac{h}{\log h}
\ \asymp\ \frac{\log X}{\log h}\,h
\ \asymp\ h,
\]
since $h=X^{\theta}$.
For the off-diagonal part we apply \cref{lem:pair-short} with $\Delta=a_1-a_2\ne 0$, sum over $(a_1,a_2)$ to get
\[
\sum_{\substack{a_1,a_2\in\cA_{\lambda}(X)\\ a_1\ne a_2}}
\ \sum_{x-a_1<m\leqslant x+h-a_1} \1_{\PP}(m)\,\1_{\PP}(m+a_1-a_2)
\ \ll\ \frac{h}{(\log h)^2}\sum_{\substack{a_1,a_2\in\cA_{\lambda}(X)\\ a_1\ne a_2}}\mathfrak{S}_2(a_1-a_2)\ +\ \#\cA_{\lambda}(X)^2.
\]
The first term on the right is $\ll h$ by \cref{lem:avg-S2} and the identity $\log h=\theta \log X$. The remaining additive term $\#\cA_{\lambda}(X)^2$ is polylogarithmic in $X$ by \eqref{eq:sizeA}, hence it is absorbed into the $\ll h$ bound. This gives $Q(x)\ll h$ uniformly in $x$.
\end{proof}

\section{Almost-all conclusions}

We now pass from information on the \emph{number of representations} in a window to information on the \emph{number of distinct integers} represented in that window.
This is the standard Cauchy--Schwarz mechanism behind Romanoff-type positive-density results: a large first moment for the representation function, together with a controlled second moment, forces many distinct sums.

Recall the definitions of $R(x)$ and $Q(x)$ from \eqref{eq:def-R}, and of $S(x)$ from \eqref{eq:def-Sx}. Here $R(x)$ counts representations in $(x,x+h]$, $Q(x)$ measures collisions between representations, and $S(x)$ counts elements of the sumset in the window.
Cauchy--Schwarz gives
\begin{equation}\label{eq:CS}
R(x)^2\ \leqslant\ S(x)\,Q(x),
\qquad\text{hence}\qquad
S(x)\ \geqslant\ \frac{R(x)^2}{Q(x)}.
\end{equation}

We will repeatedly use \eqref{eq:CS} in the following quantitative form.
If, for some $\alpha,\beta>0$, one has
\[
  R(x)\ \geqslant\ \alpha h
  \qquad\text{and}\qquad
  Q(x)\ \leqslant\ \beta h,
\]
then necessarily
\[
  S(x)\ \geqslant\ \frac{\alpha^2}{\beta}\,h.
\]

We combine the first-moment lower bound for $R(x)$ with the uniform local mean-square bound for $Q(x)$ to deduce \Cref{thm:main-elements}.
Theorem~\ref{thm:main-rep} is then a direct consequence of the same lower bound together with the pointwise upper bound $R(x)\ll h$ from the diagonal estimate in Proposition~\ref{prop:local-mean-square}.

\begin{proof}[Proof of \cref{thm:main-elements}]
By \cref{prop:first-moment}, there exists a subset $\mathcal{E}\subset[X,2X]\cap\Z$ with $\#\mathcal{E}\ll_{\varepsilon,\lambda} X\exp\left(-c_{\varepsilon}(\log X)^{1/4}\right)$ such that $R(x)\gg h$ for all $x\in\left([X,2X]\cap\Z\right)\setminus\mathcal{E}$. On the other hand, \cref{prop:local-mean-square} gives the uniform bound $Q(x)\ll h$ for every $x\in[X,2X]$. Inserting these estimates into \eqref{eq:CS} yields
\[
S(x)\ \geqslant\ \frac{R(x)^2}{Q(x)}\ \gg\ h
\]
for all $x\in\left([X,2X]\cap\Z\right)\setminus\mathcal{E}$. Since $h=X^{\theta}\to\infty$, we have $h\geqslant 1$ for $X$ large, and the trivial bound $S(x)\leqslant h+1\leqslant 2h$ completes the proof.
\end{proof}

\begin{proof}[Proof of \cref{thm:main-rep}]
The lower bound $R(x)\gg h$ for all but $O_{\varepsilon}\left(X\exp\left(-c_{\varepsilon}(\log X)^{1/4}\right)\right)$ integers $x\in[X,2X]$ is \cref{prop:first-moment}. The pointwise upper bound $R(x)\ll h$ holds for every $x\in[X,2X]$ by the diagonal estimate in the proof of Proposition~\ref{prop:local-mean-square} (equivalently, by the second estimate in Lemma~\ref{lem:pair-short} and $\#\cA_{\lambda}(X)\asymp\log X$). Combining these bounds yields $R(x)\asymp h$ for all but $O_{\varepsilon}\left(X\exp\left(-c_{\varepsilon}(\log X)^{1/4}\right)\right)$ integers $x\in[X,2X]$.
\end{proof}

\section{A representation-function direction}
\label{sec:romanoff-model}

Theorems~\ref{thm:main-elements} and~\ref{thm:main-rep} provide almost-all information on the support of $f$ (through the sumset $\cS$) and on the total number of representations in a typical window $(x,x+h]$. A natural next question is to understand pointwise multiplicities inside a typical window, for instance higher local moments $\sum_{x<n\leqslant x+h} f(n)^k$ or the frequency of large values of $f(n)$. We do not pursue such refinements here, but we record an illustrative short-interval variant of a classical theorem of Erd\H{o}s for a model Romanoff-type representation function.
Such problems fit into the Romanoff viewpoint discussed in the introduction; see also de~Polignac~\cite{dePolignac1849a,dePolignac1849b} and Romanoff~\cite{Romanoff1934}.

Throughout this section we write $\log_1 x:=\log x$ and $\log_{j+1}x:=\log\left(\log_j x\right)$ for $j\geqslant 1$ (for $x$ sufficiently large).

To keep a concrete model in mind, let
\[
  f_{\mathrm{Rom}}(n)\ :=\ \#\left\{(k,p)\in\N\times\PP : n=2^k+p\right\}.
\]
Erd\H{o}s proved that $\limsup_{n\to\infty} f_{\mathrm{Rom}}(n)/\log_2 n>0$; see~\cite{Erdos1950}. 
The next theorem gives two short-interval refinements of this phenomenon: one always finds a large value within a window of length $\asymp X\exp(-c_1\sqrt{\log X})$, and, more generally, for any fixed $0<\theta<1$ a positive proportion of windows of length $h=X^{\theta}$ contain an integer with multiplicity $\gg \log_2 X$.

\begin{theorem}[Large multiplicities in short intervals]\label{thm:erdos-short-interval}~
\begin{enumerate}[label=(\roman*),leftmargin=*]
\item There exist constants $c_1,c_2>0$ such that for $X$ sufficiently large there exists\\$n\in\left(X,X+X\exp(-c_1\sqrt{\log X})\right]$ with
\[
  f_{\mathrm{Rom}}(n)\ \geqslant\ c_2\,\log_2 n.
\]
\item Let $0<\theta<1$ and set $h:=X^{\theta}$. There exist constants $c_1(\theta),c_2(\theta)>0$ such that, for $X$ sufficiently large, for at least $c_1(\theta)\,X$ integers $x\in(X,2X]$ there exists $n\in(x,x+h]$ with
\[
  f_{\mathrm{Rom}}(n)\ \geqslant\ c_2(\theta)\,\log_2 n.
\]
\end{enumerate}
\end{theorem}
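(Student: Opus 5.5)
We would follow Erd\H{o}s's covering-congruence construction, localized to short intervals. Fix the modulus $M:=\prod_{p\leqslant z}p$ and the residue data exactly as in Erd\H{o}s~\cite{Erdos1950}. For each prime $q\leqslant z$ let $e_q$ be the multiplicative order of $2$ mod $q$. Choose $L$ with $\log L\asymp \log_2 X$ so that $2^j$ runs through many residues, and pick a residue class $b \bmod M'$ with $M'=\prod_{q\in\mathcal Q}q$ over a suitable set $\mathcal Q$ of small primes. The class is chosen so that for every $k\leqslant K:=c\log X$ lying in a prescribed set of residues mod $\operatorname{lcm}(e_q)$, the number $n-2^k$ is coprime to $M'$. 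This is the standard mechanism in which $n\equiv b\pmod{M'}$ boosts the density of primes among $n-2^k$ by the factor $\prod_{q\in\mathcal Q}q/(q-1)\asymp\log_2 X$ when $\mathcal Q$ is all primes up to $\asymp\log_2 X$ times a constant, so that $M'=X^{o(1)}$, indeed $M'\leqslant \exp(C\log_2 X)=(\log X)^C$.

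The core step is a first-moment computation restricted to the progression. Set $T(x):=\sum_{x<n\leqslant x+h,\ n\equiv b\,(M')} f_{\mathrm{Rom}}(n)=\sum_{k\leqslant K}\#\{p\in(x-2^k,x+h-2^k]:p\equiv b-2^k\ (M')\}$. For each admissible $k$ the residue $b-2^k$ is coprime to $M'$, so we need primes in short intervals in a progression of modulus $M'\leqslant(\log X)^C$.

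For (i), with $h=X\exp(-c_1\sqrt{\log X})$, the Siegel--Walfisz theorem in short intervals (Huxley-type zero density, or simply de la Vallée Poussin error $X\exp(-c\sqrt{\log X})$ with $c_1$ small) gives each term $\sim h/(\varphi(M')\log X)$ uniformly in $k\leqslant K$ with $2^k\leqslant h$. Hence $T(x)\gg \frac{h}{M'}\cdot\frac{K}{\log X}\cdot\prod_{q}\frac{q}{q-1}\gg \frac{h}{M'}\log_2X$, while the number of $n\equiv b$ in the window is $\asymp h/M'$. Averaging then yields some $n$ with $f_{\mathrm{Rom}}(n)\gg\log_2 X$.

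For (ii) we replace the pointwise prime count by a mean statement. Summing $T(x)$ over $x\in(X,2X]$ gives $\sum_x T(x)\asymp h\sum_{X<n\leqslant 3X,\,n\equiv b}f_{\mathrm{Rom}}(n)$, and the inner sum is $\gg (X/M')\log_2X$ by the prime number theorem in progressions, since Siegel--Walfisz suffices for $M'\leqslant(\log X)^C$. To upgrade this to a positive proportion of $x$, we need an upper bound for the second moment $\sum_x T(x)^2\ll \left(\frac{h}{M'}\log_2X\right)^2X$. This follows from a pointwise bound $T(x)\ll \frac{h}{M'}\log_2X$ valid for every $x$, obtained from Brun--Titchmarsh in progressions, which is available since $M'\leqslant h^{1-\delta}$. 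Paley--Zygmund then gives $T(x)\gg\frac{h}{M'}\log_2X$ for $\gg X$ values of $x$. For each such $x$ the window contains $\leqslant h/M'+1$ integers $n\equiv b$, so some $n$ in it has $f_{\mathrm{Rom}}(n)\gg\log_2 X\asymp\log_2 n$.

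The main obstacle is the Erd\H{o}s combinatorial input: choosing $\mathcal Q$ and $b$ so that simultaneously a positive proportion of $k\leqslant K$ are admissible and the product $\prod q/(q-1)$ is $\gg\log_2X$. Erd\H{o}s achieves this using primes $q$ for which $2$ has small order, for example $q\mid 2^{m!}-1$. We would import his construction verbatim. Its only feature relevant to the short-interval setting is that the modulus stays at most polylogarithmic, which is what makes both Siegel--Walfisz and Brun--Titchmarsh applicable.
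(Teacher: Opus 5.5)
You have the right architecture — restrict $n$ to one residue class modulo a product of small odd primes, compute a first moment, then pigeonhole — but the modulus is far too small to give $\log_2 X$. You take $\mathcal Q$ to be the primes up to $C\log_2 X$, so that $M'\leqslant(\log X)^C$, and you assert $\prod_{q\in\mathcal Q}q/(q-1)\asymp\log_2X$. By Mertens, $\prod_{q\leqslant z}q/(q-1)\asymp\log z$, so this choice gives only $\asymp\log_3 X$.

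No choice of $\mathcal Q$ or $b$ repairs this. For any modulus $M'$ and any class $b$, the average of $f_{\mathrm{Rom}}$ over the $n\equiv b\pmod{M'}$ in a window is $\ll M'/\varphi(M')\ll\log\log M'$. Indeed, there are $\ll\log X$ values of $k$, and Brun--Titchmarsh bounds each progression count by $\ll h/(\varphi(M')\log X)$. Your pigeonhole step can only extract this average, so with $M'\leqslant(\log X)^C$ the argument yields at best $f_{\mathrm{Rom}}(n)\gg\log_3 n$. Reaching $\log_2 X$ forces $\log\log M'\gg\log_2 X$, i.e. $M'\geqslant\exp((\log X)^{c})$. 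The paper takes $d=\prod_{3\leqslant p\leqslant\frac{c_3}{2}\sqrt{\log X}}p=\exp((\tfrac{c_3}{2}+o(1))\sqrt{\log X})$, which is well outside the Siegel--Walfisz range.

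The missing ingredient is a prime number theorem in progressions uniform for moduli up to $\exp(c\sqrt{\log X})$. This holds only for moduli not divisible by a single possible exceptional modulus $k^{\ast}$ (Landau--Page; \cref{lem:prachar}). The paper handles this by deleting $P^{+}(k^{\ast})$ from the product, once for each of the two endpoints where the asymptotic is applied; this changes $d/\varphi(d)$ by at most a bounded factor. In (i) it also requires $c_1+c_3\leqslant c_4$, so that the error $X\exp(-c_4\sqrt{\log X})$ stays below the main term $H/(\varphi(d)\log X)$.

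With the modulus fixed this way, the rest of your plan works essentially as in the paper. Your Paley--Zygmund step in (ii), driven by a pointwise Brun--Titchmarsh bound, is exactly the paper's lower bound for $\#\mathcal X$. Brun--Titchmarsh still applies at this modulus size, since $\log(h/d)\geqslant\frac{\theta}{2}\log X$.

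Finally, the covering-type combinatorics you flag as the main obstacle is not needed. With $d$ odd and $n\equiv 0\pmod d$, each $n-2^k\equiv-2^k$ is automatically coprime to $d$, so every $k$ is admissible.
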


The order of magnitude $\log_2 X$ is dictated by the ratio $d/\varphi(d)$ for a modulus $d$ of size $\exp\left(c\sqrt{\log X}\right)$, since then $d/\varphi(d)\asymp \log\log d\asymp \log_2 X$ by Mertens' theorem.
The trivial pointwise bound
\[
  f_{\mathrm{Rom}}(n)\ \leqslant\ \#\{k:2^k\leqslant n\}\ \leqslant\ \lfloor\log n/\log 2\rfloor
\]
shows that $\log_2 X$ is still far from the largest possible value for exceptional integers.

We will use the following classical prime number theorem in arithmetic progressions with an exceptional modulus.

\begin{lemma}\label{lem:prachar}
There exist absolute constants $c_{3}, c_{4}>0$ satisfying the following property: for every sufficiently large $x$ there exists an integer $k^{\ast}$ with $1<k^{\ast}\leqslant \exp\left(c_{3}\sqrt{\log x}\right)$ (depending on $x$)
 such that for every modulus $k\leqslant \exp\left(c_{3}\sqrt{\log x}\right)$ with $k^{\ast}\nmid k$, and every residue class $\ell\ (\mathrm{mod}\ k)$ with $(\ell,k)=1$, one has
\[
  \pi(x;k,\ell)\ =\ \frac{1}{\varphi(k)}\int_{2}^{x}\frac{1}{\log t}\,\mathrm{d}t+O\left(x\exp\big(-c_4\sqrt{\log x}\big)\right).
\]
\end{lemma}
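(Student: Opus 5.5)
This is the Landau--Page theorem combined with the explicit formula, in the form found in Prachar's book or Davenport's \emph{Multiplicative Number Theory} (Ch.~20). I would derive it as follows.

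Set $Q:=\exp(c_3\sqrt{\log x})$ and $T:=Q$, with $c_3$ to be chosen small.

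\textbf{Zero-free region.} The first input is Page's theorem. There is an absolute constant $c>0$ such that the product $\prod_{q\leqslant Q}\prod_{\chi\bmod q}^{\ast}L(s,\chi)$, taken over primitive characters, has no zeros in the region
\[
\sigma>1-\frac{c}{\log(QT)},\qquad |t|\leqslant T,
\]
except possibly one simple real zero $\beta_1$. Such a zero belongs to a unique real primitive character $\chi_1$ of some modulus $k_1\leqslant Q$.
\begin{itemize}[leftmargin=*]
\item If the exceptional zero exists, set $k^{\ast}:=k_1$. Here $k_1>1$, since $\zeta$ has no real zero near $1$.
\item If it does not exist, the lemma holds for every $k$. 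Any admissible choice such as $k^{\ast}=2$ only weakens the conclusion, so it is harmless.
\end{itemize}
The proof of Page's theorem is the main obstacle if it must be done from scratch. It requires the classical $3+4\cos\theta+\cos2\theta$ argument for complex zeros. It also requires the Landau repulsion argument, applied to $\zeta(s)L(s,\chi_1)L(s,\chi_2)L(s,\chi_1\chi_2)$, to show that two distinct real characters of moduli at most $Q$ cannot both have real zeros close to $1$. Since this is standard, I would cite it.

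\textbf{Explicit formula.} Let $k\leqslant Q$ with $k^{\ast}\nmid k$, and let $\chi\bmod k$ be induced by a primitive $\chi^{\ast}$. First replace $\chi$ by $\chi^{\ast}$ in $\psi(x,\chi)$; this costs $O(\log^2(kx))$. Then apply the truncated explicit formula
\[
\psi(x,\chi^{\ast})=\delta_\chi x-\sum_{|\gamma|\leqslant T}\frac{x^{\rho}}{\rho}+O\left(\frac{x\log^2(kxT)}{T}\right).
\]
The modulus of $\chi^{\ast}$ divides $k$, so it cannot equal $k^{\ast}$, and hence $\chi^{\ast}\neq\chi_1$. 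Therefore no zero in the sum is exceptional. Every zero then satisfies $\beta\leqslant 1-c/(2\log Q)$, so $|x^\rho|\leqslant x\exp\big(-(c/2c_3)\sqrt{\log x}\big)$.

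\textbf{Bounding the zero sum.} Use the crude count $N(T,\chi)\ll T\log(kT)$ together with $\sum_{|\gamma|\leqslant T}1/|\rho|\ll\log^2(kT)$. This gives a contribution
\[
\ll x\exp\big(-(c/2c_3)\sqrt{\log x}\big)\,(\log x)^2 .
\]
The truncation error is $\ll x\exp(-c_3\sqrt{\log x})(\log x)^2$. Fixing $c_3$ small (for instance $c_3^2<c/4$) makes both errors $\ll x\exp(-c_4\sqrt{\log x})$ for a suitable $c_4>0$.

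\textbf{From $\psi$ to $\pi$.} Average over characters by orthogonality. This gives
\[
\psi(x;k,\ell)=\frac{x}{\varphi(k)}+O\big(x\exp(-c_4\sqrt{\log x})\big),
\]
uniformly in $k\leqslant Q$ with $k^{\ast}\nmid k$ and $(\ell,k)=1$. Removing prime powers costs $O(\sqrt x)$. Partial summation then converts this to $\pi(x;k,\ell)=\mathrm{li}(x)/\varphi(k)+O(x\exp(-c_4'\sqrt{\log x}))$, applying the $\psi$ estimate at all $t\leqslant x$, with small $t$ handled trivially. Renaming constants gives the lemma.
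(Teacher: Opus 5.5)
Your proposal is correct and follows the same route as the paper. The paper uses Landau--Page to isolate at most one exceptional primitive real character, takes $k^\ast$ to be its conductor (or $k^\ast=2$ otherwise), applies the truncated explicit formula with $T$ of size $\exp(c\sqrt{\log x})$ so that $x^{\beta}\leqslant x\exp(-c'\sqrt{\log x})$, and finishes with orthogonality and partial summation. Your sketch is slightly more explicit than the paper's on three points: the reduction to primitive characters, why $k^\ast\nmid k$ rules out $\chi_1$, and keeping the same $k^\ast$ (and zero-free region) when applying the $\psi$-estimate at $t\leqslant x$ in the partial summation.
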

\begin{proof}
This is a standard consequence of the Landau--Page theorem together with the explicit formula for primes in arithmetic progressions.
For completeness, we briefly indicate the main points, referring to \cite[Ch.~18]{IwaniecKowalski} for full details.

Choose an absolute constant $c_{3}>0$ sufficiently small and set
\[
  Q\ :=\ \exp\left(c_{3}\sqrt{\log x}\right).
\]
By the Landau--Page theorem, for this range of moduli there is at most one primitive real Dirichlet character $\chi^{\ast}$ of conductor $q^{\ast}\leqslant Q$ whose $L$-function has a real zero extremely close to $1$ (a possible Siegel zero) in the usual zero-free region up to height $T$.
If such a character exists, we set $k^{\ast}:=q^{\ast}$; otherwise we set $k^{\ast}:=2$.
For every modulus $k\leqslant Q$ with $k^{\ast}\nmid k$, all Dirichlet $L$-functions modulo $k$ satisfy a zero-free region up to height $T=\exp\left(c_{0}\sqrt{\log x}\right)$ for some absolute $c_{0}>0$.
More precisely, for any Dirichlet character $\chi\ (\mathrm{mod}\ k)$ one has
\[
  L(s,\chi)\neq 0 \qquad \left(\Re(s)\geqslant 1-\frac{c}{\log\left(kT\right)},\ |\Im(s)|\leqslant T\right)
\]
for some absolute constant $c>0$.
In the truncated explicit formula for $\psi(x;k,\ell)$, each zero $\rho=\beta+i\gamma$ with $|\gamma|\leqslant T$ contributes $\ll x^{\beta}/|\rho|$, and the above region gives
\[
  x^{\beta} \leqslant x\exp\left(-\frac{c\log x}{\log\left(kT\right)}\right)
  \leqslant x\exp\left(-c'\sqrt{\log x}\right),
\]
since $\log\left(kT\right)\ll \sqrt{\log x}$ for $k\leqslant Q$.
The remaining truncation error is $\ll x\log^{2} x/T \ll x\exp\left(-c_{0}\sqrt{\log x}\right)\log^{2} x$.
Collecting these contributions yields
\[
  \psi(x;k,\ell)\ =\ \frac{x}{\varphi(k)}+O\left(x\exp\left(-c_{4}\sqrt{\log x}\right)\right),
\]
uniformly for $(\ell,k)=1$, for some absolute $c_{4}>0$.
A partial summation then gives the stated estimate for $\pi(x;k,\ell)$.
\end{proof}

We now turn to the proof of \cref{thm:erdos-short-interval}. The key input is \cref{lem:prachar}, applied at two nearby points to obtain prime-counting asymptotics in the relevant residue classes.

\begin{proof}[Proof of \cref{thm:erdos-short-interval}]
Let $X$ be sufficiently large.

\smallskip
\noindent\emph{Proof of (i).}
Let $k_{1}^{\ast}$ and $k_{2}^{\ast}$ be the exceptional moduli associated to $X$ and $X+\frac{1}{2}X\exp(-c_1\sqrt{\log X})$ in \Cref{lem:prachar}, respectively.
Let $p_{i}^{\ast}:=P^{+}(k_{i}^{\ast})$ denote their largest prime factors.
Define
\[
  d\ :=\ \prod_{\substack{3\leqslant p\leqslant \frac{c_{3}}{2}\sqrt{\log X}\\ p\neq p_{1}^{\ast},\ p\neq p_{2}^{\ast}} } p.
\]
Then by construction neither $k_{1}^{\ast}$ nor $k_{2}^{\ast}$ divides $d$.
By the prime number theorem and Mertens' theorem we have
\[
  d\ =\ \exp\left(\left(\frac{c_{3}}{2}+o(1)\right)\sqrt{\log X}\right) \leqslant \exp\left(c_{3}\sqrt{\log X}\right),
  \qquad
  \frac{d}{\varphi(d)}\ \asymp\ \log_2 X.
\]

Consider
\[
  \Sigma\ :=\ \sum_{\substack{X<n\leqslant X+X\exp(-c_1\sqrt{\log X})\\ d\mid n}} f_{\mathrm{Rom}}(n).
\]
Expanding $f_{\mathrm{Rom}}$ and restricting to $2^k\leqslant \frac{1}{2}X\exp(-c_1\sqrt{\log X})$ yields
\[
\begin{aligned}
  \Sigma
  &=\sum_{\substack{X<n\leqslant X+X\exp(-c_1\sqrt{\log X})\\ d\mid n}}\ \sum_{\substack{2^k+p=n}} 1
  =\sum_{k\geqslant 0}\ \sum_{\substack{X<2^k+p\leqslant X+X\exp(-c_1\sqrt{\log X})\\ p\equiv -2^k\ (\mathrm{mod}\ d)}} 1\\
  &\geqslant
  \sum_{2^k\leqslant \frac{1}{2}X\exp(-c_1\sqrt{\log X})}\ \sum_{\substack{X<p\leqslant X+\frac{1}{2}X\exp(-c_1\sqrt{\log X})\\ p\equiv -2^k\ (\mathrm{mod}\ d)}} 1.
\end{aligned}
\]
Set $H:=X\exp\left(-c_1\sqrt{\log X}\right)/2$. Since $d\leqslant \exp\left(c_3\sqrt{\log X}\right)$, applying \Cref{lem:prachar} at $x=X$ and at $x=X+H$ yields, for each residue class $a\pmod d$ with $(a,d)=1$,
\begin{align*}
\#\left\{X<p\leqslant X+H:\ p\equiv a\pmod d\right\}
&\geqslant \frac{1}{\varphi(d)}\int_X^{X+H}\frac{\mathrm{d}t}{\log t}
+O\left(X\exp\left(-c_4\sqrt{\log X}\right)\right)\\
&\geqslant \frac{H}{3\varphi(d)\log X}
+O\left(X\exp\left(-c_4\sqrt{\log X}\right)\right)\\
&\geqslant \frac{H}{6\varphi(d)\log X},
\end{align*}
provided that $c_1+c_3\leqslant c_4$. Since the number of integers $k\geqslant 0$ with $2^k\leqslant \frac{1}{2}X\exp(-c_1\sqrt{\log X})$ is at least $(\log X)/(2\log 2)$, we obtain
\[
  \Sigma\ \geqslant\ \frac{1}{24\log 2}\frac{X}{\varphi(d)}\exp(-c_1\sqrt{\log X}).
\]

On the other hand, the interval $\left(X,X+X\exp(-c_1\sqrt{\log X})\right]$ contains at most 
$$
\frac{X\exp(-c_1\sqrt{\log X})}{d}+1
$$ 
multiples of $d$, hence there exists an $n\in\left(X,X+X\exp(-c_1\sqrt{\log X})\right]$ with $d\mid n$ such that
\[
  f_{\mathrm{Rom}}(n)\ \geqslant\ \frac{\Sigma}{X\exp(-c_1\sqrt{\log X})/d+1}\ \gg\ \frac{d}{\varphi(d)}\ \gg\ \log_2 X\ \asymp\ \log_2 n.
\]
This proves (i).

\smallskip
\noindent\emph{Proof of (ii).}
Let $0<\theta<1$ and set $h:=X^{\theta}$.
Let $k_{1}^{\ast}$ and $k_{2}^{\ast}$ be the exceptional moduli associated to $X+h/2$ and $2X$ in \Cref{lem:prachar}, respectively, and let $p_{i}^{\ast}:=P^{+}(k_{i}^{\ast})$ denote their largest prime factors.
With the modulus
\[
  d\ :=\ \prod_{\substack{3\leqslant p\leqslant \frac{c_{3}}{2}\sqrt{\log X}\\ p\neq p_{1}^{\ast},\ p\neq p_{2}^{\ast}} } p,
\]
we have $d\leqslant \exp\left(c_{3}\sqrt{\log X}\right)$ and again $d/\varphi(d)\asymp \log_2 X$.

Consider
\[
  \Sigma\ :=\ \sum_{X<x\leqslant 2X}\ \sum_{\substack{x<n\leqslant x+h\\ d\mid n}} f_{\mathrm{Rom}}(n).
\]
Expanding $f_{\mathrm{Rom}}$ and restricting to $2^k\leqslant h/2$ yields
\[
\begin{aligned}
  \Sigma
  &=\sum_{X<x\leqslant 2X}\ \sum_{\substack{x<n\leqslant x+h\\ d\mid n}}\ \sum_{\substack{2^k+p=n}} 1\\
  &=\sum_{X<x\leqslant 2X}\ \sum_{\substack{x<2^k+p\leqslant x+h\\ p\equiv -2^k\ (\mathrm{mod}\ d)}} 1 \\
  &\geqslant \sum_{2^k\leqslant h/2}\ \sum_{\substack{X<x\leqslant 2X\\ x<2^k+p\leqslant x+h\\ p\equiv -2^k\ (\mathrm{mod}\ d)}} 1.
\end{aligned}
\]
If $2^k\leqslant h/2$ and $x<p\leqslant x+h/2$, then $x<2^k+p\leqslant x+h$. Therefore
\[
  \Sigma
  \geqslant \sum_{2^k\leqslant h/2}\ \sum_{\substack{X<p\leqslant 2X+h/2\\ p\equiv -2^k\ (\mathrm{mod}\ d)}}\ \sum_{\substack{p-h/2<x\leqslant p\\ X<x\leqslant 2X}} 1
  \geqslant \frac{h}{3}\sum_{2^k\leqslant h/2}\ \sum_{\substack{X+h/2<p\leqslant 2X\\ p\equiv -2^k\ (\mathrm{mod}\ d)}} 1
\]
for $X$ sufficiently large. Applying \Cref{lem:prachar} at $x=X+h/2$ and at $x=2X$ yields, for each residue class $a\ (\mathrm{mod}\ d)$ coprime to $d$,
\[
  \#\left\{X+h/2<p\leqslant 2X : p\equiv a\ (\mathrm{mod}\ d)\right\}\ \geqslant\ \frac{X}{2\varphi(d)\log X}
\]
once $X$ is large enough. Using $\#\{k\in\N:2^k\leqslant h/2\}\geqslant (\log h)/(3\log 2)$, we obtain
\[
  \Sigma\ \geqslant\ C_0\,h\,\frac{\log h}{\log X}\,\frac{X}{\varphi(d)}
  \ =\ C_0\,\theta\,\frac{hX}{\varphi(d)}
\]
for some absolute constant $C_0>0$.

On the other hand, for every $x\in(X,2X]$ set
\[
  S_d(x)\ :=\ \sum_{\substack{x<n\leqslant x+h\\ d\mid n}} f_{\mathrm{Rom}}(n).
\]
Expanding $f_{\mathrm{Rom}}$ and using the condition $d\mid n$ gives
\[
  S_d(x)
  \ =\ 
  \sum_{k\geqslant 0}\ \#\left\{x-2^k<p\leqslant x+h-2^k : p\in\PP,\ p\equiv -2^k\ (\mathrm{mod}\ d)\right\}.
\]
The summand is empty unless $2^k\leqslant 2X+h$, hence the sum runs over $\ll \log X$ values of $k$. Since $d\leqslant \exp\left(c_{3}\sqrt{\log X}\right)$ and $h=X^{\theta}$, we have $h\geqslant d$ and
\[
  \log\left(\frac{h}{d}\right)\ =\ \theta\log X+O\left(\sqrt{\log X}\right)
  \ \geqslant\ \frac{\theta}{2}\log X
\]
for $X$ sufficiently large. By the Brun--Titchmarsh inequality in arithmetic progressions (see, e.g.,~\cite[Chapter~18]{IwaniecKowalski}), each term is
\[
  \ll \frac{h}{\varphi(d)\log(h/d)}
  \ \ll_{\theta}\ \frac{h}{\varphi(d)\log X},
\]
uniformly in $x$ and $k$. Summing over $k$ yields the uniform bound
\[
  S_d(x)\ \leqslant\ C_1(\theta)\,\frac{h}{\varphi(d)}
\]
for some constant $C_1(\theta)>0$ and all $x\in(X,2X]$ (once $X$ is large enough).

Set
\[
  t\ :=\ \frac{C_0\theta}{4}\,\frac{h}{\varphi(d)}.
\]
Let $\mathcal{X}$ be the set of integers $x\in(X,2X]$ for which $S_d(x)\geqslant t$. Then, using the uniform upper bound above,
\[
  \Sigma
  \leqslant X\,t + \#\mathcal{X}\left(C_1(\theta)\frac{h}{\varphi(d)}-t\right).
\]
Since $\Sigma\geqslant C_0\theta\,hX/\varphi(d)=4Xt$, we deduce that
\[
  \#\mathcal{X}\ \geqslant\ \frac{3Xt}{C_1(\theta)h/\varphi(d)}\ \gg_{\theta}\ X.
\]
Fix $x\in\mathcal{X}$. Since $(x,x+h]$ contains at most $h/d+1\leqslant 2h/d$ multiples of $d$, there exists $n\in(x,x+h]$ such that $d\mid n$ and
\[
  f_{\mathrm{Rom}}(n)\ \geqslant\ \frac{t}{2h/d}\ \gg_{\theta}\ \frac{d}{\varphi(d)}\ \gg\ \log_2 X\ \asymp\ \log_2 n.
\]
This completes the proof of (ii).
\end{proof}
\subsection*{A single-window question}

Part (i) of Theorem \ref{thm:erdos-short-interval} extends slightly an old result of Erd\H{o}s \cite[Theorem 1]{Erdos1950}, which was motivated by a problem of P\'al Tur\'an, communicated privately to Erd\H{o}s, asking whether the function $f_{\mathrm{Rom}}(n)$ is unbounded. Erd\H{o}s' result
$\limsup_{n\to\infty} f_{\mathrm{Rom}}(n)/\log_2 n>0$
was recently extended in a different manner to general sequences in \cite{ChenDing,ChenDing2023}. Based on Theorem \ref{thm:erdos-short-interval},
it is natural to ask for a single-window analogue of \Cref{thm:erdos-short-interval}.

{\it Tur\'an's problem revisited.}
Fix an integer $A\geqslant 1$. Does there exist $\theta\in(0,1)$ such that, for all sufficiently large $X$, one can find an integer $n\in(X,X+h]$ with $f_{\mathrm{Rom}}(n)\geqslant A$?

For $A=1$ the answer is positive for every $\theta>0.52$, by a theorem of Li on primes in short intervals~\cite{LiShortIntervals}.
This sharpens the classical exponent $0.525$ of Baker, Harman and Pintz~\cite{BHP2001}.
Indeed, for all sufficiently large $y$ the interval $[y-y^{0.52},y]$ contains a prime.
Taking $y:=X+h-1$ and assuming $\theta>0.52$, we have $y^{0.52}\leqslant h/2$ for $X$ sufficiently large, hence $[y-y^{0.52},y]\subset[X,X+h-1]$.
Thus there exists a prime $p\in[X,X+h-1]$, and then $n:=p+1\in(X,X+h]$ with $f_{\mathrm{Rom}}(n)\geqslant 1$.

For $A\geqslant 2$ we do not know how to obtain such a uniform-in-$X$ statement.
Informally, one would need to force, inside a prescribed interval of length $h$, several simultaneous primality conditions of the shape $n-2^k\in\mathbb P$, which is closer in spirit to prime-constellation problems in short intervals than to the averaged Romanoff method.

We note, however, that two weaker statements follow from existing results.
First, \Cref{thm:erdos-short-interval} already implies that for every fixed $A$ and every fixed $\theta\in(0,1)$, a positive proportion of $x\in(X,2X]$ have the property that $(x,x+h]$ contains some $n$ with $f_{\mathrm{Rom}}(n)\geqslant A$.
Second, by Maynard's theorem on primes in admissible tuples~\cite{Maynard2015}, for every $A$ there exist infinitely many $n$ with $f_{\mathrm{Rom}}(n)\geqslant A$.
For instance, choose an integer $r\geqslant 1$ and pick distinct exponents $k_1,\dots,k_r$ all divisible by
\[
  L\ :=\ \mathrm{lcm}_{p\leqslant r}(p-1).
\]
Then for each prime $p\leqslant r$ we have $2^{k_i}\equiv 1\pmod p$, so the shifts $-2^{k_i}$ are all congruent to $-1$ modulo $p$, hence the set $\{-2^{k_1},\dots,-2^{k_r}\}$ is admissible.
Maynard's result therefore yields infinitely many $n$ for which at least $A$ of the numbers $n-2^{k_i}$ are prime.

\subsection*{Gaps between representable odd numbers.} Here comes another closely related problem of Yong-Gao Chen. Let $\mathcal{R}=\{s_1<s_2<\cdots<s_m<\cdots\}$ be the set of all odd numbers which can be represented as the sum of a prime and a power of $2$. Repeatedly, Chen (private communications) asked whether
$$
\limsup_{m\rightarrow\infty}\big(s_{m+1}-s_m\big)=\infty.
$$
This, if true, would be a little unusual in view of Romanoff's theorem asserting that $\PP+\{2^k:k\geqslant 0\}$ has positive lower density (see \cite{Del2020}). 
For a more general conjecture involving the gaps between the elements of $\mathcal{A}_\lambda$, one can refer to the article of Chen and Xu \cite[Conjecture 1.5]{ChenXu2024}.
Cram\'er's conjecture (see e.g., \cite{Cramer1936}) 
$$
\limsup_{n\rightarrow\infty}\frac{p_{n+1}-p_n}{(\log p_n)^2}<\infty
$$
implies that 
$$
\limsup_{m\rightarrow\infty}\frac{s_{m+1}-s_m}{(\log s_m)^2}<\infty.
$$
Indeed, if $s_m=p+2^k$ and $p'$ is the next prime after $p$, then $p'+2^k\in\mathcal{R}$ and $s_{m+1}-s_m\leqslant p'-p$. 
Currently, we cannot even guess whether 
$$
\limsup_{m\rightarrow\infty}\frac{s_{m+1}-s_m}{(\log s_m)^2}=0
$$
or not. 

\section*{Acknowledgments}
We thank Steve Fan for his very detailed explanations on Lemma \ref{lem:prachar}. We also thank the anonymous referee for helpful comments and suggestions that improved the exposition.

\section*{Data availability}
No data was used for the research described in the article.


\begin{thebibliography}{99}
\bibitem{BHP2001}
R.~C.~Baker, G.~Harman, and J.~Pintz, \emph{The difference between consecutive primes.\ II},
Proc.\ Lond.\ Math.\ Soc.\ (3)\ \textbf{83} (2001) 532--562, doi: \nolinkurl{10.1112/plms/83.3.532}.

\bibitem{vanDerCorput1950}
J.~G.~van~der~Corput, \emph{On de Polignac's conjecture},
Simon Stevin \textbf{27} (1950) 99--105.

\bibitem{ChenDing}
Y.-G.~Chen and Y.~Ding,  \emph{On a conjecture of Erd\H{o}s}, C. R. Math. Acad. Sci. Paris  \textbf{360} (2022) 971--974, doi: \nolinkurl{10.5802/crmath.345}.

\bibitem{ChenDing2023}
Y.-G.~Chen and Y.~Ding, \emph{Quantitative results of the Romanov type representation functions}, Q. J. Math. \textbf{74} (2023) no.~4 1331--1359, doi: \nolinkurl{10.1093/qmath/haad022}.

\bibitem{ChenXu2024}
Y.-G.~Chen and J.-Z.~Xu, \emph{On integers of the form $p+2^{k_1^{r_1}}+\cdots+2^{k_t^{r_t}}$}, J. Number Theory \textbf{258} (2024) 66--93, doi: \nolinkurl{10.1016/j.jnt.2023.10.018}.

\bibitem{Cramer1936} H. Cram\'er, \emph{On the order of magnitude of the difference between consecutive prime numbers,} Acta Arith. {\bf2} (1936) 23--46.

\bibitem{Del2020} G. M. Del Corso, I. Del Corso, R. Dvornicich and F. Romani, \emph{On computing the density of integers of the form $2^n+p$,} Math. Comput. {\bf89} (2020) 2365--2386, doi: \nolinkurl{10.1090/mcom/3537}.

\bibitem{DingZhai}
Y.~Ding and W.~Zhai, \emph{A generalization of the Romanoff theorem}, Int. J. Number Theory {\bf22} (2026) no. 1 163--173, doi: \nolinkurl{10.1142/S1793042126500107}.

\bibitem{ElsholtzSchlagePuchta2018}
C.~Elsholtz and J.-C.~Schlage-Puchta, \emph{On Romanov's constant},
Math.\ Z.\ \textbf{288} (2018) 713--724, doi: \nolinkurl{10.1007/s00209-017-1908-x}.

\bibitem{Erdos1950}
P.~Erd\H{o}s, \emph{On integers of the form $2^k+p$ and some related problems}, Summa Bras. Math. \textbf{II} (1950) 113--123.

\bibitem{GuthMaynard2024}
L.~Guth and J.~Maynard, \emph{New large value estimates for Dirichlet polynomials}, arXiv:2405.20552v2 (2026).

\bibitem{HalberstamRichert}
H.~Halberstam and H.-E.~Richert, \emph{Sieve Methods}, Academic Press, 1974.

\bibitem{IwaniecKowalski}
H.~Iwaniec and E.~Kowalski, \emph{Analytic Number Theory},
American Mathematical Society Colloquium Publications, vol.~53, American Mathematical Society, 2004.


\bibitem{LiShortIntervals}
R.~Li, \emph{The number of primes in short intervals and numerical calculations for Harman's sieve}, preprint, arXiv:2308.04458.

\bibitem{Maynard2015}
J.~Maynard, \emph{Small gaps between primes}, Ann.\ of Math.\ (2) \textbf{181} (2015) no.~1 383--413, doi: \nolinkurl{10.4007/annals.2015.181.1.7}.

\bibitem{dePolignac1849a}
A.~de~Polignac, \emph{Six propositions arithmologiques d\'eduites du crible d'\'Eratosth\`ene},
Nouvelles annales de math\'ematiques \textbf{8} (1849) 423--429.

\bibitem{dePolignac1849b}
A.~de~Polignac, \emph{Recherches nouvelles sur les nombres premiers},
C.\ R.\ Acad.\ Sci.\ Paris \textbf{29} (1849) 738--739.

\bibitem{Romanoff1934}
N.~P.~Romanoff, \emph{\"Uber einige S\"atze der additiven Zahlentheorie},
Math.\ Ann.\ \textbf{109} (1934) 668--678, doi: \nolinkurl{10.1007/BF01449161}.

\end{thebibliography}
\end{document}